\numberwithin{equation}{section}
\title{Anderson localization for  block jacobi operators with quasi-periodic meromorphic potential }
\author{Xiaojian Zhang}
\date{}
\newtheorem{theorem}{Theorem}
\newtheorem{lemma}[theorem]{Lemma}
\newtheorem{proposition}[theorem]{Proposition}
\begin{document}

	\maketitle

	\begin{abstract}
		In this paper, we study block Jacobi operators on $\mathbb{Z}$ with quasi-periodic meromorphic potential. We prove the non-perturbative Anderson localization for such operators in the large coupling regime.
	\end{abstract}

% The case $r=2$ was proved first by Radon, as a lemma in his proof of Helly's theorem.

\section{Introduction and main result }\label{section-introduction}

The study of spectral properties  of  discrete quasi-periodic Schr\"odinger operators has attracted great attention over the years. Of particular importance is the phenomenon of Anderson localization, which means the operator has  pure point spectrum with exponentially decaying eigenfunctions.  

The Anderson localization for almost Mathieu operator (AMO) in the large potential regime was first proved in the fundamental works of  Sinai \cite{ref15} and Fr\"ohlich-Spencer-Wittwer \cite{ref16}. These results rely on KAM type perturbative method. Along this line, Eliasson \cite{ref17} proved via the KAM reducibility method  pure point spectrum for a class of quasi-periodic Schr\"odinger operators with certain Gevrey regular potential. Later, Jitomirskaya  made the great breakthrough in \cite{ref18, ref19}, where a non-perturbative method was first developed for the AMO. 

In 2000, Bourgain-Goldstein \cite{ref9} extended the non-perturbative method of Green's function estimates to general analytic quasi-periodic Schr\"odinger operators 
\begin{equation}\label{fc1}	H_{\lambda}(x)=\lambda v(x+n\omega)+\Delta
\end{equation} defined on $ \mathcal{l}^{2}(\mathbb{Z})$,
where 
\begin{equation}\label{key}
	\Delta(n, n^{\prime})=\left\{
	\begin{aligned}
		1,\quad |n-n^{\prime}|=1,\\
		0,\quad |n- n^{\prime}|\neq 1,
	\end{aligned}\right.
	\quad  n, n^{\prime}\in\mathbb{Z}.
\end{equation} In this remarkable work, the authors introduced the new ingredients of large deviation estimates concerning the Lyapunov exponent together with the semi-algebraic sets theory.  Based on the method of \cite{ref4}, Bourgain-Jitomirskaya  \cite{ref10} generalized the results of \cite{ref9} to the band operators case
\begin{equation}\label{uii}
	H_{(n,s),(n^{\prime},s^{\prime})}(\omega,\theta)=\lambda \delta_{nn^{\prime}}\delta_{ss^{\prime}}v_{s}(\theta+n\omega)+\Delta    \quad (n\in\mathbb{Z}  ,s=1,...,b),
\end{equation} where 
\begin{equation}\label{key}
	\Delta((n,s),(n^{\prime},s^{\prime}))=\left\{
	\begin{aligned}
		&1,   \quad |n-n^{\prime}|+|s-s^{\prime}|=1\\
		&0,    \quad   
		\rm{otherwise} 
	\end{aligned}\right.
\end{equation}
and $ v_{s}(s=1,...b) $  non-constant  analytic on $ \mathbb{T} $.

In \cite{ref2}, Klein developed the results of \cite{ref10} further and established non-perturbative Anderson localization for block Jacobi operators 
\begin{equation}\label{uvv}
	[H_{\lambda}(x)\vec{\varphi}]_{n}:=-(W_{n+1}(x)\vec{\varphi}_{n+1}+W^{T}_{n}(x)\vec{\varphi}_{n-1}+R_{n}(x)\vec{\varphi}_{n})+\lambda F_{n}(x)\vec{\varphi}_{n} \quad n\in \mathbb{Z},
\end{equation}	
where $W_{n}(x),R_{n}(x)  $ and $ F_{n}(x) $ are   quasi-periodic functions defined by  \begin{equation}\label{key}
	W_{n}(x):=W(x+n\omega),R_{n}(x):=R(x+n\omega),F_{n}(x):=F(x+n\omega),
\end{equation}
with $ W(x),R(x),F(x) $ being   $ l $-order symmetric matrix-valued functions  and  all the elements are analytic functions defined on 	 $\mathbb{T}=\mathbb{R}/2\pi \mathbb{Z}$. Very recently, Jian-Shi-Yuan \cite{ref12} extended the results of \cite{ref2} to the long-range block operators case
\begin{equation}\label{uuvv}
	(H_{\epsilon,\omega}(x)\vec{\varphi})_{n}:=\epsilon \sum_{k\in \mathbb{Z}}^{}W_{k}\vec{\varphi}_{n-k}+V(x+n \omega)\vec{\varphi}_{n},
\end{equation}
where $ V(x)=diag\{v_{1}(x),...,v_{l}(x)\} $. $ v_{i}(x)(1\leq i \leq l) $ are non-constant real analytic functions on $ \mathbb{T} $. $ W_{k} (k\in \mathbb{Z}) $ are $ l\times l $ matrices satisfying $ W_{k}^{\star}=W_{-k}(W_{k}^{\star} $ is the conjugate transpose of $ W_{k} $), $ \vert\vert W_{k}^{\star} \vert\vert\leq e^{-\rho |k|} $  and $  \rho>0 $. We should remark that the above mentioned works require the potential to be analytic.  In fact, it is an important problem wether or not the non-perturbative Anderson localization still holds for operators with non-analytic quasi-periodic potentials. Our main motivation comes from this problem. 

In this paper, 
we study the Anderson localization of (\ref{uvv}) where the \textbf{ elements of diagonal} of $ V_{n}(x) , (n\in \mathbb{Z})$ are \textbf{meromorphic functions}, where $ V_{n}(x)=\lambda F_{n}(x)+R_{n}(x),  (n\in \mathbb{Z})$. 

Our main result is 
\begin{theorem}
{Given a positive integer $ l $. Let  $ W(x)=(W_{ij}(x):1\leq i\leq l,1\leq j\leq l), R(x)=(R_{ij}(x):1\leq i\leq l,1\leq j\leq l) $ and $ F(x)=(F_{ij}(x):1\leq i\leq l,1\leq j\leq l) $ be $ l\times l $ symmetric matrices for $\forall x\in \mathbb{T}$. Suppose that  
		
		(i). each entry of $ W(x), R(x)-{ diag}\{R_{ii}(x), 1\leq i\leq l\} $ and  $ F(x)-{ diag}\{F_{ii}(x), 1\leq i\leq l\} $ is real analytic in $ x\in \mathbb{T} $.
		
		(ii). For all $ 1\leq i\leq l $, the diagonal elements $ F_{ii}(x)$  and  $ R_{ii}(x)  $ are meromorphic in $ x\in \mathbb{T} $. They can be written as 
		\begin{equation}\label{a8}
			F_{ii}(x)=\dfrac{\tilde{\phi}^{F}_{ii}(x)}{\phi^{F}_{ii}(x)},R_{ii}(x)=\dfrac{\tilde{\phi}^{R}_{ii}(x)}{\phi^{R}_{ii}(x)},
		\end{equation} where $ \tilde{\phi}^{F}_{ii}(x),\phi^{F}_{ii}(x), \tilde{\phi}^{R}_{ii}(x)  $ and $ \phi^{R}_{ii}(x)  $ are real analytic in $ \mathbb{T} $, and   the number of zeros of $ \phi^{F}_{ii}(x)  $ and  $ \phi^{R}_{ii}(x)  $ is finite for all $ 1\leq i \leq l $.  
		
		(iii). For $\forall t\in \mathbb{R}$,
		\begin{equation}\label{a6}
			{	det}[(F(x)-tI)M(x)]\not\equiv 0 
		\end{equation}as a function of $ x\in\mathbb{T} $, where $ I $ is $ l\times l $ unit matrix, and  \begin{equation}\label{a7}
			M(x)={ diag}\{\phi^{F}_{ii}(x),1\leq i \leq l \}{ diag}\{\phi^{R}_{ii}(x),1\leq i \leq l \}. 
		\end{equation} 
		
		Let \begin{equation}\label{key}
			W_{n}(x):=W(x+n\omega),R_{n}(x):=R(x+n\omega),F_{n}(x):=F(x+n\omega), n\in \mathbb{Z}. 
		\end{equation}
		Assume that  $\omega \in \mathbb{T}$ satisfies Diophatine condition($ { DC}_{A,C_{0}} $):
		\begin{equation}\label{key}
			\vert\vert	k\omega\vert\vert \ge C_{0}\dfrac{1}{|k|^{A}}	
		\end{equation}
		for all  $ k  \in \mathbb{Z}\backslash\{0\} $ , where  $  C_{0}> 0 $, $ A> 1 $ are constants, and  $ \vert\vert	k\omega\vert\vert=\mathop{min}\limits_{j\in\mathbb{Z}}|k\omega-j| $.
		
		Then there exists   $ \lambda_{0}=\lambda_{0}(l,F,R,W)>0 $  such that if $\lambda>\lambda_{0}$, the  block Jacobi operator 
		\begin{equation}\label{a9}\begin{aligned}
				[H_{\lambda}(x)\vec{\varphi}]_{n}:=-(W_{n+1}(x)\vec{\varphi}_{n+1}+W^{T}_{n}(x)\vec{\varphi}_{n-1}+R_{n}(x)\vec{\varphi}_{n})+\lambda F_{n}(x)\vec{\varphi}_{n}, \\ \lambda>0, x\in \mathbb{T},  
				\forall \vec{\varphi}=(\vec{\varphi}_{n})_{n\in \mathbb{Z}}\in \mathcal{l}^{2}(\mathbb{Z},\mathbb{R}^{l})\end{aligned} 
		\end{equation} satisfies Anderson localization for fixed $ x_{0}\in \mathbb{T} $ and almost every $\omega\in DC_{A,C_{0}} $.}
\end{theorem}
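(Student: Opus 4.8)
The plan is to strip the poles from the potential by an explicit renormalization, thereby reducing the problem to the analytic block‑Jacobi setting of Klein \cite{ref2} and Bourgain--Goldstein \cite{ref9}, and then to run the non‑perturbative Green's function scheme for the renormalized (now analytic) cocycle while keeping careful track of the renormalization factor. Put $m(x):=\det M(x)=\prod_{i=1}^{l}\phi^{F}_{ii}(x)\phi^{R}_{ii}(x)$ with $M$ the diagonal matrix of \eqref{a7}, which by hypothesis (ii) is real analytic on $\mathbb T$ with only finitely many zeros, and $m_n(x):=m(x+n\omega)$. Multiplying the $n$-th block row of $H_{\lambda}(x)\vec\varphi=E\vec\varphi$ by the analytic diagonal matrix $M(x+n\omega)$ turns it, for every $x$ off the orbit of the finitely many poles, into an equivalent linear system with real analytic coefficients; on a box $\Lambda=[1,N]$ this gives
\begin{equation}\label{pp-det}
\det\big(H_{\lambda,\Lambda}(x)-E\big)=\Big(\prod_{n=1}^{N}m_n(x)\Big)^{-1}\det\widetilde H_{\Lambda}(x,E),
\end{equation}
with $\widetilde H_{\Lambda}(\cdot,E)$ analytic, so that, via Cramer's rule, the finite-volume Green's function $G_{\Lambda}(x,E)=(H_{\lambda,\Lambda}(x)-E)^{-1}$ is a ratio of analytic functions of $x$ sharing the same factor $\prod_n m_n$; moreover the $2l\times 2l$ transfer matrices $A_n(x,E)$ of \eqref{a9} satisfy $\prod_{n=1}^{N}m_n(x)\cdot A_{N}(x,E)\cdots A_{1}(x,E)=\mathcal A_{N}(x,E)$ with $\mathcal A_{N}(\cdot,E)$ analytic, whence $\tfrac1N\log\|\mathcal A_{N}(x,E)\|=\tfrac1N\log\|A_{N}\cdots A_{1}\|+\tfrac1N\sum_{n=1}^{N}\log|m_n(x)|$ is genuinely subharmonic in $x$ on a strip. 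In this way every object of the Bourgain--Goldstein/Klein scheme becomes analytic (or subharmonic) modulo the explicit correction $\tfrac1N\sum_{n\le N}\log|m(x+n\omega)|$.

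Second, I would establish largeness of the Lyapunov exponent at large coupling. Since $m$ has only finitely many zeros, hypothesis (iii) is equivalent to $\det(F(x)-tI)\not\equiv0$ for every $t\in\mathbb R$; together with the analyticity of the off-diagonal parts this forces $\det\big(M(x)(\lambda F(x)-R(x)-EI)\big)\not\equiv0$ for all $\lambda$ large, uniformly in $E$, so that its log-modulus lies in $L^{1}(\mathbb T)$. A Herman/Sorets--Spencer type lower bound, as in Klein's block-operator analysis, then gives that the Lyapunov exponents $L_{1}(E)\ge\cdots\ge L_{2l}(E)$ of the transfer cocycle satisfy $L_{l}(E)\ge\tfrac12\log\lambda-C$ uniformly in $E$ for $\lambda>\lambda_{0}$, together with the splitting $L_{l}(E)-L_{l+1}(E)\ge\log\lambda-C$ required by the block Green's function estimates; subtracting the constant $\int_{\mathbb T}\log|m|$ produced by the factor in \eqref{pp-det} does not spoil this, as that integral is independent of $\lambda$. (If $W(x)$ is allowed to be singular one follows Klein's modified transfer-matrix formalism, which is orthogonal to the issue of poles.)

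Third comes the large deviation theorem. For the analytic cocycle $\mathcal A_{N}$ the LDT $\big|\{x:|\tfrac1N\log\|\mathcal A_{N}(x,E)\|-\tfrac1N\int\log\|\mathcal A_{N}(\cdot,E)\|\,|>\epsilon\}\big|<e^{-cN^{\tau}}$ follows from the usual subharmonicity, Cartan estimate and avalanche principle argument, the avalanche principle being applied to the multiplicative family $\mathcal A_{[a,b]}:=\big(\prod_{n=a}^{b}m_n\big)A_{b}\cdots A_{a}$ off a small neighbourhood of the zeros of the $m_n$. The one genuinely new ingredient is the control of the correction $\tfrac1N\sum_{n\le N}\log|m(x+n\omega)|$: from above it is $\le\log\max|m|$, and from below the Diophantine condition keeps the orbit $\{x+n\omega\}_{n<N}$ from approaching the finitely many zeros of $m$ too often, which yields $\big|\{x:\tfrac1N\sum_{n\le N}\log|m(x+n\omega)|<\int_{\mathbb T}\log|m|-\epsilon\}\big|<e^{-cN^{\tau}}$ --- the estimate classically used for $\log|2\sin\pi x|$ in the analytic theory. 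Combining, one transfers the LDT to $\tfrac1N\log|\det(H_{\lambda,\Lambda}(x)-E)|$ and, via Cramer's rule and the splitting of the Lyapunov exponents, obtains exponential off-diagonal decay of $G_{\Lambda}(x,E)$ off an exponentially small set of $x$.

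Finally I would eliminate double resonances and conclude, following Bourgain--Goldstein and Klein. Fix $x_{0}\in\mathbb T$; one must rule out, for a.e.\ $\omega\in DC_{A,C_{0}}$, the occurrence of two far-apart scales at which both $x_{0}$ and $x_{0}+k\omega$ are resonant. Approximating the analytic data ($\widetilde H_{\Lambda}$, $\mathcal A_{N}$, and $\prod m_n$) by polynomials of controlled degree makes the relevant bad sets semi-algebraic of controlled degree; the Yomdin--Gromov/Bourgain complexity bounds then limit the number of resonant intervals, and a Borel--Cantelli argument over $\omega$, exploiting the Diophantine condition, removes the double resonances. The standard Poisson-type formula --- expressing a generalized eigenfunction of $H_{\lambda}(x_{0})$ with $E$ in the spectrum through boundary values of $G_{\Lambda}(x_{0},E)$ --- then forces exponential decay of that eigenfunction, so $H_{\lambda}(x_{0})$ has pure point spectrum with exponentially decaying eigenfunctions; uniformity in $E$ yields the claim. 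I expect the main obstacle, and essentially the only new point, to be the renormalization step together with the handling of the correction factor: one must check that clearing the denominators preserves \emph{all} the quantitative exponential estimates of the analytic theory, and that the poles --- i.e.\ the zeros of the $m_n$ --- produce no spurious resonances in the semi-algebraic step. Once this is in place, the remainder is a faithful adaptation of the Klein/Bourgain--Goldstein machinery.
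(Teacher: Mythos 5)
Your renormalization step is exactly the paper's key idea: the paper clears the poles by multiplying $H_N(x)-E$ on the right by ${diag}\{M_j(x)\}$ to obtain the analytic matrix $\tilde H_N(x,E)$, and all subsequent estimates are run on $\tilde H_N$; the crucial observation (which you also make) is that in Cramer's rule the cleared denominators reappear only as a bounded factor in the numerator of the Green's function, so no lower bound on $\prod_n m_n(x)$ is ever needed there. Where you diverge is the machinery in the middle. You go through the transfer-matrix cocycle, a Herman/Sorets--Spencer lower bound on the Lyapunov exponents $L_1\ge\dots\ge L_{2l}$, the avalanche principle, and an LDT for $\tfrac1N\log\|\mathcal A_N\|$, which then forces you to control the correction $\tfrac1N\sum_{n\le N}\log|m(x+n\omega)|$ from below (the $\log|2\sin\pi x|$-type estimate). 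The paper instead follows the Bourgain--Jitomirskaya band-model route: a combinatorial path expansion gives the upper bound on minors of $\tilde H_N$ (Proposition 2), a direct perturbative splitting $\tilde H_N=\lambda D_N+B_N$ with $D_N$ diagonal-dominant plus subharmonicity of $\tfrac1{Nl}\log|\det\tilde H_N|$ gives the lower bound $\int_{\mathbb T}\tfrac1{Nl}\log|\det\tilde H_N|\,dx\ge\log\lambda-C_1$ (Proposition 3), and the LDT is applied to this determinant directly. The paper's route avoids the avalanche principle and the lower-bound control of $\prod m_n$ entirely; yours is closer to Klein's original analytic argument and would also work, at the cost of that extra (standard but nontrivial) estimate.

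One point you pass over too quickly, and which the paper flags as the main new difficulty: because the potential is meromorphic the operator is unbounded, so the energy $E$ cannot be confined to a compact interval, and your repeated claims of bounds "uniformly in $E$" (for the Lyapunov exponent lower bound, the LDT constants, the large-coupling comparison $\lambda\gg|E|$) are not automatic. The paper handles this by inserting the normalization $\tfrac1{\sqrt{1+E^2}}$ into the definition of $\tilde H_N$ and splitting into the regimes $|E|\le C_2\lambda$ and $|E|>C_2\lambda$, using hypothesis (iii) with $t=E/\lambda$ in the first regime and a rescaled determinant condition in the second, so that the lower bound on $|\det\tilde H_N|$ on a horizontal line in the strip holds uniformly over all real $E$. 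You would need an analogous device (e.g.\ normalizing the transfer matrices by $\sqrt{1+E^2}$ and verifying the Herman-type bound in both regimes) before the rest of your scheme closes.
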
	
Remark: we remark that
 
(i). We extend the result of Klein\cite{ref2}. In his model, the entrices of matrices $ W(x) $, $ R(x) $ and  $ F(x) $ are  all analytic functions. Both of our results are nonperturbative. 

(ii). The operator that \cite{ref2} studied  is bounded self-adjoint.
In our paper,  the operator contains meromorphic terms. Then the operator we study is not bounded. 
This leads to some difficulities. For example, the associated energy $ E $ of a bounded self-adjoint operator can be restricted to one  compact interval of $ \mathbb{R} $. 
In our case, the energy $ E $ could not more be restricted to any compact interval. With some  tricks, we solve the  problem of singularity brought by meromorphic fucntions.

%	\qquad Consider the  quasi-periodic Schr$\ddot{o}$dinger operator on $\{\varphi_{n}\} \in l^{2}(Z)$ defined by
%	
%	\begin{equation}		
	%		[H_{\lambda}(x)\varphi]_{n}:=-(\varphi_{n+1}+\varphi_{n-1}-2\varphi_{n})+\lambda f(x+n\omega)\varphi_{n}
	%	\end{equation}
%	
%	where $\lambda\gg0$,indexed by $x\in \mathbb{T}=\mathbb{R}/\mathbb{Z}$, $f$ is analytic ,$\omega$  fixed irrational satisfy DC .
%	
%	In this paper we study the  block jacobi operator model  which is  one type of matrix-valued Schrodinger-type operator.
%	
%	The corresbounding $H_{\lambda}(x)$ define on 	
%	$\{\vec{\varphi}_{n}\}\in l^{2}(Z,\mathbb{R}^{d})$ of the form 	
%	
%	
%	
%	\begin{equation}\label{key}
	%		[H_{\lambda}(x)\vec{\varphi}]_{n}:=-(W_{n+1}(x)\vec{\varphi}_{n+1}+W^{T}_{n}(x)\vec{\varphi}_{n-1}+R_{n}(x)\vec{\varphi}_{n})+\lambda F_{n}(x)\vec{\varphi}_{n} 
	%	\end{equation}
%	
%	$W_{n}(x),R_{n}(x),F_{n}(x)$ are  generated by  matrix-value function 	$W(x),R(x),F(x):T\rightarrow Mat_{d}(\mathbb{R}),Mat_{d}(\mathbb{R})$ denote the $d\times d$ matrix .

%	The Anderson localization of $H_{\lambda}(x)$ when $W(x),R(x),F(x)$ are analytic have been proved.

\section{Upper bound of the minor}

For all integer $ N>1$, define \begin{equation}\label{a0}
	H_{N}(x)=R_{[1,N]}H_{\lambda}(x)R_{[1,N]},
\end{equation}
where $ R_{[1,N]} =$ coordinate restriction to $[1,N]\subset\mathbb{Z}$, and the associated Green's function $ G_{N}(x,E)=[H_{N}(x)-E)]^{-1} $.

Let \begin{equation}\label{a1}
	\tilde{H}_{N}(x,E):=[H_{N}(x)-E]{ diag}\{\dfrac{1}{\sqrt{1+E^{2}}}M_{j}(x), 1\leq j\leq l \},
\end{equation}
where \begin{equation}\label{a2}
	M_{j}(x)={ diag}\{\phi^{F}_{ii}(x+j\omega),1\leq i\leq l \}{ diag}\{\phi^{R}_{ii}(x+j\omega),1\leq i\leq l \}=M(x+j\omega).
\end{equation}
Then 
\begin{equation}\label{a3}
	G_{N}(x,E)={ diag}\{\dfrac{1}{\sqrt{1+E^{2}}}M_{j}(x), 1\leq j\leq l \}\tilde{H}^{-1}_{N}(x,E).
\end{equation}

Note that $ H_{N}(x) $ and  $	G_{N}(x,E) $ are $Nl\times Nl $ matrices. For 
$1\leq \alpha \leq Nl ,1\leq \alpha^{\prime}\leq Nl $, by $ \tilde{H}^{-1}_{N}(x,E)(\alpha ,\alpha^{\prime}) $ and $ G_{N}(x,E)(\alpha ,\alpha^{\prime}) $ denote the 	$ (\alpha ,\alpha^{\prime}) $-entry of $ \tilde{H}^{-1}_{N}(x,E) $ and $ G_{N}(x,E) $ respectively.
Then by Cramer's rule,  \begin{equation}\label{a4}
	|\tilde{H}^{-1}_{N}(x,E)(\alpha ,\alpha^{\prime})|=\frac{|\tilde{\mu}_{N}(x,E)_{(\alpha,\alpha^{\prime})}|}{|{ det}[\tilde{H}_{N}(x,E)]|},	\end{equation}
where 
$\tilde{\mu}_{N}(x,E)_{(\alpha,\alpha^{\prime})} $ is 
$ (\alpha^{\prime},\alpha) $-minor of $ \tilde{H}_{N}(x,E) $.
Noting that $H_{\lambda}(x)=((H_{\lambda}(x))_{i,j})_{i,j\in \mathbb{Z}}$, where 
$(H_{\lambda}(x))_{i,j}$ is   
$l\times l$ matrix . We have that there exists integers $ p(\alpha),p(\alpha^{\prime}) \in [0,N-1] $, $ q(\alpha),q(\alpha^{\prime}) \in [1,l] $, such that 
\begin{equation}\label{a5}
	\alpha=p(\alpha)l+q(\alpha),\alpha^{\prime}=p(\alpha^{\prime})l+q(\alpha^{\prime}).
\end{equation}
Furthermore, by $ \eqref{a1}, \eqref{a2}, \eqref{a3}, \eqref{a4} $ and $ \eqref{a5} $,  \begin{equation}\label{e4}
	|G_{N}(x,E)(\alpha,\alpha^{\prime})|=\frac{1}{\sqrt{1+E^{2}}}|\phi^{F}_{qq}(x+(p+1)\omega)\phi^{R}_{qq}(x+(p+1)\omega)\frac{\tilde{\mu}_{N}(x,E)_{(\alpha,\alpha^{\prime})}}{det[\tilde{H}_{N}(x,E)]}|,
\end{equation}
where $ q=q(\alpha),p=p(\alpha^{\prime}) $.
\begin{proposition}
	
	{There exists a constant } $  C=C(W,F,R,l)>0$ such that   
	\begin{equation}\label{e2}
		\frac{1}{Nl}{ log}|\tilde{\mu}_{N}(x,E)_{(\alpha,\alpha^{\prime})}|\leq -\frac{|p(\alpha)-p(\alpha^{\prime})|}{Nl}{ log}(\lambda+|E|)+{ log}(1+\frac{\lambda}{|E|})+C
	\end{equation}
	for 
	$1\leq \alpha \leq Nl ,1\leq \alpha^{\prime}\leq Nl $ and  $x\in \mathbb{T}$.
\end{proposition}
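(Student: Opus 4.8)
The plan is to work directly with the matrix $\tilde H_N(x,E)$ of \eqref{a1}, which is engineered precisely so that all of its entries are bounded (right‑multiplying $H_N(x)-E$ by $\mathrm{diag}\{(1+E^2)^{-1/2}M_j(x)\}$ clears the poles of the meromorphic diagonal entries and neutralizes the growth in $E$) while it stays block tridiagonal in the $l\times l$ block structure — a block‑tridiagonal matrix times a block‑diagonal one is block tridiagonal. First I would make the boundedness quantitative. Splitting $F_n,R_n$ into their (meromorphic) diagonal and (analytic, by (i)) off‑diagonal parts and using $F_{ii}=\tilde\phi^F_{ii}/\phi^F_{ii}$, $R_{ii}=\tilde\phi^R_{ii}/\phi^R_{ii}$ together with $M_n=\mathrm{diag}\{\phi^F_{ii}(x+n\omega)\}\,\mathrm{diag}\{\phi^R_{ii}(x+n\omega)\}$, one checks that the $n$‑th diagonal block of $\tilde H_N(x,E)$ equals $(1+E^2)^{-1/2}$ times a matrix each of whose entries is a combination, with coefficients of size $O(\lambda+1+|E|)$, of products of the analytic functions appearing in (i)--(ii); hence every entry of a diagonal block of $\tilde H_N$ is $\le C_1(1+\tfrac\lambda{|E|})$, using $\sqrt{1+E^2}\ge\max(1,|E|)$ and $C_1=C_1(W,F,R,l)$. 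Its $(n,n\pm1)$ off‑diagonal blocks are $-(1+E^2)^{-1/2}W_{n+1}M_{n+1}$ and $-(1+E^2)^{-1/2}W_n^TM_{n-1}$, so every entry of an off‑diagonal block is $\le C_1(1+E^2)^{-1/2}$; and the elementary inequality $\sqrt{1+E^2}\,(1+\tfrac\lambda{|E|})\ge|E|+\lambda$ upgrades this to $\le C_1(1+\tfrac\lambda{|E|})(\lambda+|E|)^{-1}$. Thus off‑diagonal entries of $\tilde H_N$ are a factor $(\lambda+|E|)^{-1}$ smaller than diagonal ones — this is where the gain in \eqref{e2} originates. (One takes $E\neq0$; at $E=0$ the right side of \eqref{e2} is $+\infty$.)

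Next I would carry out the combinatorics. Up to sign, $\tilde\mu_N(x,E)_{(\alpha,\alpha')}$ is the determinant of the $(Nl-1)\times(Nl-1)$ matrix $B$ obtained from $\tilde H_N$ by deleting row $\alpha'$ and column $\alpha$; $B$ inherits a banded structure of bandwidth $\le 2l$, and every nonzero entry of $B$ lies in a diagonal block or an immediately off‑diagonal block inherited from $\tilde H_N$. In $\det B=\sum_\sigma\mathrm{sgn}(\sigma)\prod_iB_{i\sigma(i)}$ only the at most $(4l+1)^{Nl-1}\le e^{CNl}$ permutations $\sigma$ supported in the band contribute. For each such $\sigma$ I count the factors $B_{i\sigma(i)}$ lying in off‑diagonal blocks. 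Let $k_1\le k_2$ be the two block indices containing $\alpha$ and $\alpha'$, so $k_2-k_1=|p(\alpha)-p(\alpha')|$. For every $t$ with $k_1\le t<k_2$, exactly one of the two deleted indices sits in blocks $1,\dots,t$, so the number of rows of $B$ in blocks $1,\dots,t$ differs by exactly $1$ from the number of its columns there; since $\sigma$ is a bijection, some index in a block $\le t$ is matched with an index in a block $>t$, and block tridiagonality forces that pair into blocks $t$ and $t+1$, i.e.\ $B_{i\sigma(i)}$ is an off‑diagonal entry. As $t$ ranges over these $|p(\alpha)-p(\alpha')|$ values, the resulting entries lie in distinct off‑diagonal blocks, so $\sigma$ uses at least $|p(\alpha)-p(\alpha')|$ off‑diagonal entries. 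Combining with the entrywise bounds gives, for every contributing $\sigma$, $|\prod_iB_{i\sigma(i)}|\le C_1^{Nl}(1+\tfrac\lambda{|E|})^{Nl}(\lambda+|E|)^{-|p(\alpha)-p(\alpha')|}$ (the case $\lambda+|E|\le1$ being immediate, the off‑diagonal bound then being no larger than the diagonal one). Summing over the $\le e^{CNl}$ contributing permutations and taking $\tfrac1{Nl}\log$ yields \eqref{e2} with $C=C(W,F,R,l)$.

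I expect the only genuinely delicate point to be the counting step: one must make sure that the surplus of one row (or column) on the left of each cut between blocks $t$ and $t+1$ really produces $|p(\alpha)-p(\alpha')|$ \emph{distinct} off‑diagonal entries, which relies crucially on block tridiagonality — no nonzero entry of $\tilde H_N$ bridges non‑adjacent blocks, so each such crossing is pinned to a single off‑diagonal block, and a given matched pair crosses at exactly one value of $t$. Everything else — the entrywise bounds, the bandwidth bookkeeping, and the estimate $(4l+1)^{Nl-1}$ on the number of band‑supported permutations — is routine. It is worth noting that this argument never inverts the $W_n$, so no nondegeneracy of the off‑diagonal blocks is required, which matters here since none is assumed.
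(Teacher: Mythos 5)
Your argument is correct and rests on the same underlying mechanism as the paper's proof: after the normalization \eqref{a1} the matrix is block tridiagonal, its off-diagonal-block entries are smaller than its diagonal-block entries by a factor $(\lambda+|E|)^{-1}$, and every surviving term in a determinant expansion of the $(\alpha',\alpha)$-minor must pick up at least $|p(\alpha)-p(\alpha')|$ off-diagonal entries; summing the at most $e^{CNl}$ terms gives \eqref{e2}. The implementation differs, though. The paper works with $g=\sqrt{1+E^{2}}\,\tilde H_{N}$, expands $\det[g_{\urcorner\alpha,\urcorner\alpha'}]$ as a sum over self-avoiding paths $\gamma$ from $\alpha'$ to $\alpha$ times complementary minors (see \eqref{h2}), bounds the complementary minors crudely by Hadamard's inequality, and extracts the gain only along the path, where the count $b(\gamma)\ge|p(\alpha)-p(\alpha')|$ of block jumps is immediate from the triangle inequality \eqref{f1}; the factor $(1+\lambda/|E|)^{Nl}$ then appears only at the end, when \eqref{h1} is divided by $(1+E^{2})^{Nl/2}$. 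You instead apply the full Leibniz expansion to the deleted matrix, which requires the less obvious fact that \emph{every} band-supported permutation, not just a path, crosses each of the $|p(\alpha)-p(\alpha')|$ block cuts; your row/column imbalance argument for this is sound (deleting one row in block $k_{2}$ and one column in block $k_{1}$, say, forces a crossing at every intermediate cut, tridiagonality pins each crossing to an adjacent pair of blocks, and distinct cuts yield distinct entries). The trade-off is that you replace the Hadamard step by this counting lemma and keep uniform entrywise bounds throughout; you are also more explicit than the paper about the degenerate regimes $E=0$ and $\lambda+|E|\le 1$, where the right-hand side of \eqref{e2} is respectively infinite or handled by dropping the off-diagonal gain. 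Either route yields the stated constant $C=C(W,F,R,l)$.
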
  	

\begin{proof}
	For 
	$1\leq i \leq N ,1\leq j\leq N $, by $ \tilde{H}_{N}(x,E)(i ,j) $ denote the 	$ (i ,j) $-block of  $ N\times N $ block matrix $ \tilde{H}_{N}(x,E) $. By \eqref{a0}, \eqref{a1} and \eqref{a2}, 
	\begin{equation}\label{c11}		\tilde{H}_{N}(x,E)(i,j)=\left\{
		\begin{aligned}
			\frac{1}{\sqrt{1+E^{2}}}\tilde{V}_{j}(x),&&  i=j,  \\
			\frac{1}{\sqrt{1+E^{2}}}(-\tilde{W}_{i}^{T}(x)),&&  i=j+1,  \\
			\frac{1}{\sqrt{1+E^{2}}}(-\tilde{W}_{j}(x)),&& i=j-1, \\
			0,&& { otherwise},
		\end{aligned}\right.
	\end{equation}
	where
	\begin{equation}\label{b1}
		\tilde{V}_{j}(x)=[\lambda F_{j}(x)+R_{j}(x)-EI]M_{j}(x),
	\end{equation}  
	\begin{equation}\label{b2}
		-\tilde{W}_{j}(x)=(-W_{j}(x))M_{j}  \quad 
	\end{equation} 	
	and \begin{equation}\label{b3}
		-\tilde{W}_{j}^{T}(x)=(-W^{T}_{j}(x))M_{j-1}(x).
	\end{equation}	
	
	In view of \eqref{b1}, \eqref{b2} and \eqref{b3},  there exists a constant   $ C=C(W,R,F)> 0 $ such that for  $ 1\leq  j\leq N $,
	\begin{equation}\label{dd1}
		\vert\vert \tilde{W}_{j}(x)\vert\vert_{\infty} \leq C 
	\end{equation}
	and 
	
	\begin{equation}\label{dd2}
		\vert\vert \tilde{V}_{j}(x)\vert\vert_{\infty} \leq C(\lambda+|E|).
	\end{equation}
	
	Let $ g:=\sqrt{1+E^{2}}\tilde{H}_{N}(x,E) $.  By $ 	\mathbb{P}_{\alpha^{\prime}\to \alpha}$ denote the set of all paths  \begin{equation}\label{key}
		\{\gamma=(r_{1},r_{2},...,r_{s}) :( 1\leq s \leq Nl)\},
	\end{equation}
	where 
	
	(i) for $ i=1,...,s  ,$\begin{equation}\label{key}
		r_{i}\in [1,Nl]\subset\mathbb{Z}.
	\end{equation} 
	
	(ii) For $\forall 1< i < s , 1< j < s$ and $ i\neq j $, 
	\begin{equation}\label{key}
		r_{i}\neq r_{j}, r_{i}\neq r_{1},r_{i}\neq r_{s}.
	\end{equation} 
	
	(iii) For $ i=1, j=s $,
	\begin{equation}\label{key}
		r_i=\alpha^{\prime},r_j=\alpha.
	\end{equation}
	By $ g(r_{i},r_j) $ denote the 
	$ (r_{i},r_j) $-entry of $ Nl\times Nl $ matrix $ g $. By  $g_{\urcorner\alpha,\urcorner\alpha^{\prime}} $ denote  the matrix remaining after excluding the row $ \alpha $ and column $ \alpha^{\prime}  $ of $ g $. By  $g_{\urcorner\gamma,\urcorner\gamma^{\prime}} $ denote  the matrix remaining after excluding the row $ r_{1},r_{2},...,r_{s} $ and column $ r_{1},r_{2},...,r_{s}  $ of $ g $. Then by expansion of a determinant by  row and column,
	
	\begin{equation}\label{h2}
		\tilde{\mu}_{N}(x,E)_{(\alpha,\alpha^{\prime})}=(\frac{1}{\sqrt{1+E^{2}}})^{Nl}det[g_{\urcorner\alpha,\urcorner\alpha^{\prime}}]=(\frac{1}{\sqrt{1+E^{2}}})^{Nl}(-1)^{\alpha+\alpha^{\prime}}\sum_{\gamma\in \mathbb{P}_{\alpha^{\prime}\to \alpha}}^{}(-1)^{|\gamma|+1}c(\gamma)det[g_{\urcorner\gamma,\urcorner\gamma^{\prime}}],
	\end{equation}
	where $c(\gamma)=g(r_{1},r_{2})\dots g(r_{s-1},r_{s})$ and $ |\gamma|=s $ is the lenth of the path  $ \gamma $.

	Using  Hadamard's  inequality, we have 
	\begin{equation}\label{h3}
		|{ det}[g_{\urcorner\gamma,\urcorner\gamma^{\prime}}]|\leq C(\lambda+|E|)^{Nl-|\gamma|}.
	\end{equation}
	Then by \eqref{h2} and  \eqref{h3},
	\begin{equation}\label{d1}
		|{ det}[g_{\urcorner\alpha,\urcorner\alpha^{\prime}}]|\leq \sum_{\gamma\in \mathbb{P}_{\alpha^{\prime}\to \alpha}}|c(\gamma)|(C(\lambda+|E|))^{Nl-|\gamma|}.
	\end{equation}	
	
	We are now in position to investigate 	$ c(\gamma) $. Note that 
	$ c(\gamma) $ depends on the path $ \gamma=(r_{1},r_{2},...,r_{s})	 $, where  $ 1\leq r_{i} \leq Nl,(i=1,...s). $ Write 
	$ r_{i}=p(r_{i})l+q(r_{i}),0\leq 	p(r_{i})\leq N-1, 1\leq q(r_{i})\leq l.  $
	
	For $ 1\leq j \leq N, 1\leq m\leq l $ and $ 1\leq n \leq l $,  by  $\tilde{V}_{j}(x)(m,n),-\tilde{W}_{j}(x)(m,n) $ and $ -\tilde{W}_{j}^{T}(x)(m,n) $ denote the 
	$ (m,n) $-entry of $ l\times l $ matrices $ \tilde{V}_{j}(x),-\tilde{W}_{j}(x) $  and $ -\tilde{W}_{j}^{T}(x)$ respectively. By   \eqref{c1}, for $i,j=1,...s  $,
	
	\begin{equation}\label{e998}
		g(r_{i},r_{j})=\left\{
		\begin{aligned}
			\tilde{V}_{p(r_{i})+1}(x)(q(r_{i}),q(r_{j})),&&   p(r_{i})=p(r_{j}) , \\
			-\tilde{W}^{T}_{p(r_{i}+1)}(x)(q(r_{i}),q(r_{j})),&&   p(r_{i})-p(r_{j})=1,  \\
			-\tilde{W}_{p(r_{i})+1}(x)(q(r_{i}),q(r_{j})),&&   p(r_{j})-p(r_{i}) =1,\\
			0,&& { otherwise}.
		\end{aligned}\right.
	\end{equation}
	
	Then  by \eqref{dd1} ,\eqref{dd2} and \eqref{e998}, we have
	
	\begin{equation}\label{k1}
		|g(r_{i},r_{j})|\leq \left\{
		\begin{aligned}
			C(\lambda+|E|),&&   p(r_{i})=p(r_{j}),  \\
			C ,&&|p(r_{j})-p(r_{i})| =1,  \\		 0,&& { otherwise}.
		\end{aligned}\right.
	\end{equation}
Consider the  set	
	
	\begin{equation}
		\mathbb{P}_{\alpha^{\prime}\to \alpha}^{\star}:=\{\gamma=(r_{1},r_{2},...,r_{s})\in \mathbb{P}_{\alpha^{\prime}\to \alpha},|g(r_{i},r_{j})|\neq 0 ,\forall 1\leq i\leq s-1\}.
	\end{equation}
	By $ b(\gamma) $ denote the number of elements of set 
	\begin{equation}\label{e1}
		\{1\leq i\leq s-1:\gamma=(r_{1},r_{2},...,r_{s})\in \mathbb{P}_{\alpha^{\prime}\to \alpha},|p(r_{i})-p(r_{i+1})| =1 \}.
	\end{equation} 	
	
	Hence by \eqref{k1} and \eqref{e1},
	\begin{equation}\label{d2}
		|c(\gamma)|=\prod_{i=1}^{s-1}|g_{r_{i},r_{i+1}}|\leq C^{b(\gamma)}(C(\lambda+|E|))^{s-b(\gamma)}=C^{|\gamma|}(\lambda+|E|)^{|\gamma|-b(\gamma)}. 
	\end{equation}
	By \eqref{d1} and \eqref{d2},
	
	\begin{equation}\label{key99}	
		|{ det}[g_{\urcorner\alpha,\urcorner\alpha^{\prime}}]|\leq \sum_{\gamma\in \mathbb{P}_{\alpha^{\prime}\to \alpha}}C^{|\gamma|}(\lambda+|E|)^{|\gamma|-b(\gamma)} (C(\lambda+|E|))^{Nl-|\gamma|}. \end{equation}
	Furthermore by \eqref{e1},	
	
	\begin{equation}\label{f1}
		|p(\alpha)-p(\alpha^{\prime})|\leq \sum_{j=1}^{s-1}|p(r_{j+1})-p(r_{j})|=b(\gamma)\leq Nl.
	\end{equation}
	
	Then by \eqref{d1}, \eqref{d2} and \eqref{f1}, 
	\begin{equation}\label{g1}
		|{ det}[g_{\urcorner\alpha,\urcorner\alpha^{\prime}}]|\leq\sum_{b=|p(\alpha)-p(\alpha^{\prime})|}^{Nl}\sum_{\gamma\in \mathbb{P}^{*}_{\alpha^{\prime}\to \alpha},b(\gamma)=b}^{}C^{|\gamma|}(\lambda+|E|)^{|\gamma|-b(\gamma)} (C(\lambda+|E|))^{Nl-|\gamma|}.
	\end{equation} 
	
	Notice that the number of $ \gamma\in \mathbb{P}^{*}_{\alpha^{\prime}\to \alpha} $ is at most$  (3l)^{Nl} $. Then  by \eqref{g1}, 
	\begin{equation}
		|{ det}[g_{\urcorner\alpha,\urcorner\alpha^{\prime}}]|\leq \sum_{b=|p(\alpha)-p(\alpha^{\prime})|}^{Nl} (3lC)^{Nl} (\lambda+|E|)^{Nl-b}=(3lC)^{Nl} (\lambda+|E|)^{Nl}\sum_{b=|p(\alpha)-p(\alpha^{\prime})|}^{Nl}(\lambda+|E|)^{-b}.
	\end{equation}
	Thus 
	\begin{equation}\label{h1}
		|{ det}[g_{\urcorner\alpha,\urcorner\alpha^{\prime}}]|\leq(3lC)^{Nl} (\lambda+|E|)^{Nl}(\lambda+|E|)^{-|p(\alpha)-p(\alpha^{\prime})|}.
	\end{equation}
	By \eqref{h2}, \eqref{h1},

	\begin{equation}
		\tilde{\mu}_{N}(x,E)_{(\alpha,\alpha^{\prime})}\leq(3lC)^{Nl} (\frac{\lambda}{|E|}+1)^{Nl}(\lambda+|E|)^{-|p(\alpha)-p(\alpha^{\prime})|}.
	\end{equation}
	Taking the logarithm on both sides,  
	\begin{equation}
		\frac{1}{Nl}{ log}|\tilde{\mu}_{N}(x,E)_{(\alpha,\alpha^{\prime})}|\leq -\frac{|p(\alpha)-p(\alpha^{\prime})|}{Nl}{ log}(\lambda+|E|)+{ log}(1+\frac{\lambda}{|E|})+C.
	\end{equation}
\end{proof}	
\section{Lower bound of the determinant}
\begin{proposition}
	{ Assume that  \eqref{a6} and \eqref{a7} hold, then there exists constants $\lambda_{1}=\lambda_{1}(W,F,R)>0$ and $C_{1}=C_{1}(W,F,R)>0$ such that if} $\lambda > \lambda_{1}$, 
		\begin{equation}\label{key}
		\int_{\mathbb{T}} \dfrac{1}{Nl}\log |det(\tilde{H}_{N}(x,E))|dx\geq \log\lambda-C_{1}
	\end{equation}
	{ for all $ \omega\in \mathbb{T}$, $N\in\mathbb{N} $ and} $ x\in \mathbb{T} .$
	
\end{proposition}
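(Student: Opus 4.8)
The plan is to pass from $\tilde H_N$ to an analytic trigonometric polynomial, reduce to a single‑block determinant, and there invoke hypothesis \eqref{a6} through a Jensen/Mahler‑measure estimate. By \eqref{a1}, $\det\tilde H_N(x,E)=(1+E^2)^{-Nl/2}\det g(x,E)$, where $g:=\sqrt{1+E^2}\,\tilde H_N(x,E)$ is the block‑tridiagonal matrix of \eqref{c11}: its diagonal blocks are $\tilde V_j(x)=[\lambda F_j(x)+R_j(x)-EI]M_j(x)$, its off‑diagonal blocks $-\tilde W_j(x),-\tilde W_j^{T}(x)$. The whole point of multiplying by $\mathrm{diag}\{M_j(x)\}$ and dividing by $\sqrt{1+E^2}$ is that, by \eqref{b1}--\eqref{dd2}, \emph{every entry of $g$ is real analytic on $\mathbb T$} — the poles of the meromorphic diagonal entries of $F$ and $R$ being exactly cancelled by the zeros of $M_j$ — with $\|\tilde W_j\|_\infty\le C$ and $\|\tilde V_j\|_\infty\le C(\lambda+|E|)$. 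Hence $h_N(x):=\det g(x,E)$ is a trigonometric polynomial in $x$ of degree $\le D_1Nl$ for some $D_1=D_1(W,F,R)$, not identically zero once $\lambda$ is large (by \eqref{a6}), and writing $\int_{\mathbb T}$ for the normalized integral it suffices to prove
\[
\int_{\mathbb T}\log|h_N(x)|\,dx\ \ge\ Nl\log\lambda+\tfrac{Nl}{2}\log(1+E^2)-Nl\,C_1 .
\]

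\textbf{Step 2 (the block‑diagonal term dominates).} Expanding $\det g$ by the same path/Leibniz expansion used in the proof of the previous Proposition, $h_N(x)=\prod_{j=1}^{N}\det\tilde V_j(x)+\mathcal R_N(x)$, where $\mathcal R_N$ collects all terms using at least one (hence at least two) off‑diagonal block; there are $\le(3l)^{Nl}$ of these and, by Hadamard, each is $\le C^{Nl}(\lambda+|E|)^{Nl-2}$. Since $\det\tilde V_j(x)=\det\tilde V(x+j\omega)$ with $\tilde V(y):=(\lambda F(y)+R(y)-EI)M(y)$, a comparison of leading behaviour (on a suitable circle in the complex $x$‑plane via Rouch\'e, or by matching the extreme Fourier coefficients, which for $\mathcal R_N$ sit at strictly lower modes once $\lambda$ is large) shows that $\exp\int_{\mathbb T}\log|h_N|$ and $\big(\exp\int_{\mathbb T}\log|\det\tilde V|\big)^{N}$ differ by at most a factor $C^{Nl}$. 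Together with Jensen's formula this reduces the task to the single‑block bound
\[
\int_{\mathbb T}\log\big|\det\tilde V(x)\big|\,dx\ \ge\ l\log\lambda+\tfrac{l}{2}\log(1+E^2)-l\,C .
\]

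\textbf{Step 3 (single block via \eqref{a6}).} Write $\tilde V(x)=\lambda^{l}\big[(F(x)-tI)M(x)+\tfrac1\lambda R(x)M(x)\big]$ with $t=E/\lambda$ and expand the determinant in powers of $\lambda$; its leading term is $\lambda^{l}\det[(F(x)-tI)M(x)]$, and \eqref{a6} states exactly that this is \emph{not identically zero in $x$, for every} $t\in\mathbb R$. The matrix $(F(x)-tI)M(x)+\tfrac1\lambda R(x)M(x)$ is a polynomial in $(e^{ix},e^{-ix})$ of degree bounded independently of $(t,\lambda)$, with coefficient vector depending continuously on $(t,1/\lambda)$ and staying in a compact set bounded away from $0$ as $t$ ranges over $\mathbb R$ and $1/\lambda\to0$ (the dominant term $(-t)^{l}\det M(x)$ for $|t|$ large, resp.\ $\det[(F(x)-tI)M(x)]\not\equiv0$ for $|t|$ bounded, prevents degeneration). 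Since the logarithmic Mahler measure $f\mapsto\exp\int_{\mathbb T}\log|f|$ is continuous on trigonometric polynomials of bounded degree away from the zero polynomial, and since Mahler's coefficient inequality controls $\exp\int_{\mathbb T}\log|h_N|$ from below by the modulus of an extreme Fourier coefficient of $h_N$, one obtains $\exp\int_{\mathbb T}\log|\det[(F(x)-tI)M(x)+\tfrac1\lambda R(x)M(x)]|\,dx\ge c>0$ uniformly in $t,\lambda$ for $\lambda\ge\lambda_1$, hence $\int_{\mathbb T}\log|\det\tilde V(x)|\,dx\ge l\log\lambda-lC$; carrying the $\lambda^l$ through Steps 1--2 yields the Proposition.

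\textbf{Main obstacle.} The hard part is the \emph{uniformity in $E$}, i.e.\ in $t=E/\lambda\in\mathbb R$, together with uniformity in $\lambda$: this is precisely what forces \eqref{a6} to be imposed for every $t\in\mathbb R$ rather than only on a compact interval — recall that, the operator being unbounded, $E$ cannot be confined to a compact set — and it is what makes the normalization by $\frac1{\sqrt{1+E^2}}$ and the factor $M_j(x)$ indispensable, since they simultaneously kill the $|E|$‑growth of the determinant and the meromorphic singularities, turning an unbounded meromorphic object into an analytic trigonometric polynomial amenable to Jensen/Mahler estimates. A secondary technical point, handled by the leading‑behaviour comparison in Step 2, is to confirm that absorbing the remainder $\mathcal R_N$ costs only a factor $C^{Nl}$, i.e.\ a bounded amount after dividing by $Nl$.
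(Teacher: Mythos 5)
Your overall strategy is the right one and matches the paper's in spirit: isolate the block-diagonal dominant term $\lambda(F-\tfrac{E}{\lambda}I)M$, treat everything else as a perturbation, and let hypothesis \eqref{a6} plus the normalizations $\tfrac{1}{\sqrt{1+E^2}}$ and $M_j$ do the work. But there are two genuine gaps. First, the entries of $W$, $R-\mathrm{diag}(R_{ii})$, $F-\mathrm{diag}(F_{ii})$ and the functions $\phi^{F}_{ii},\tilde\phi^{F}_{ii},\phi^{R}_{ii},\tilde\phi^{R}_{ii}$ are only assumed \emph{real analytic} on $\mathbb{T}$, not trigonometric polynomials; hence $h_N=\det g$ is an analytic function with full (exponentially decaying) Fourier support, not a trigonometric polynomial of degree $\le D_1Nl$. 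Everything in your Steps 1--3 that rests on bounded-degree polynomial structure --- ``extreme Fourier coefficients at strictly lower modes,'' Mahler's coefficient inequality, continuity of the Mahler measure on polynomials of bounded degree --- therefore does not apply as stated, and truncating first would require controlling the effect of truncation on $\int\log|\cdot|$, which is as hard as the original problem near the zeros of $h_N$.

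Second, and more seriously, the comparison in Step 2 is the entire content of the proposition and is only asserted. A pointwise bound $|h_N|\ge|\prod_j\det\tilde V_j|-|\mathcal{R}_N|$ is useless on $\mathbb{T}$ because $\prod_j\det\tilde V_j$ vanishes there (e.g.\ $\det M$ has zeros since the $\phi_{ii}$ do). The paper's resolution is precisely the step you wave at with ``Rouch\'e on a suitable circle'': it complexifies $x$ to a strip $\mathcal{A}_\rho$, uses \eqref{a6} and the finiteness of zeros to find a horizontal line $\mathcal{C}_{\delta_0}$ on which $|\det[\tfrac{1}{\sqrt{1+E^2}}(F(z)-\tfrac{E}{\lambda}I)M(z)]|\ge\varepsilon_0$ uniformly in $E$ (this is \eqref{c3}), runs the perturbation argument $\det[\lambda D_N+B_N]=\lambda^{Nl}\det D_N\det[I+\lambda^{-1}D_N^{-1}B_N]$ \emph{on that line} to get \eqref{d5}, and then transfers the bound back to $\int_{\mathbb{T}}$ by the two-sided subharmonicity inequality \eqref{d4}. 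That last transfer is not a one-sided Jensen monotonicity statement: since $\mathbb{T}$ sits between $\mathcal{C}_{\delta_0}$ and the boundary of the strip, convexity of $r\mapsto\int\log|h_N|$ works against you, and the argument indispensably uses the \emph{global upper bound} \eqref{d6}, $u\le\log(C_6(\lambda+|E|))$, to pay for the harmonic-measure weight $\beta$. This upper-bound ingredient is entirely absent from your proposal, and without it (or some equivalent) the lower bound on $\mathbb{T}$ does not follow from a lower bound off the real axis. I would also flag that in Step 3 the Fourier coefficients of $\det[(F-tI)M]$ grow like $|t|^l$ as $t\to\infty$, so they do not ``stay in a compact set''; a renormalization in $t$ is needed before any compactness/continuity argument, which is exactly why the paper splits into the cases $|E|\le C_2\lambda$ and $|E|>C_2\lambda$.
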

	\begin{proof} 
For $1\leq \alpha \leq Nl ,1\leq \alpha^{\prime}\leq Nl $, by $ \tilde{H}(x,E)(\alpha ,\alpha^{\prime}) $ denote the 	$ (\alpha ,\alpha^{\prime}) $-entry of $ Nl\times Nl $ matrix $ \tilde{H}_{N}(x,E) $. 

By \eqref{a9},\eqref{a0},\eqref{a1} and \eqref{a2}, for given $ \rho>0 $, denote by $ \tilde{H}_{N}(z,E)(\alpha ,\alpha^{\prime}) ,
( 1\leq \alpha \leq Nl ,1\leq \alpha^{\prime}\leq Nl) $ the analytic continuation of  $ \tilde{H}_{N}(x,E)(\alpha ,\alpha^{\prime}),
(1\leq \alpha \leq Nl ,1\leq \alpha^{\prime}\leq Nl)$ to the  strip  \begin{equation}\label{key}
	\mathcal{A}_{\rho}=\{z:z=x+\sqrt{-1}y, x\in [0,1], |y|\leq \rho \}.  
\end{equation}
 Let $ \tilde{H}_{N}(z,E):=(\tilde{H}_{N}(z,E)(\alpha ,\alpha^{\prime}))_{(1\leq \alpha \leq Nl ,1\leq \alpha^{\prime}\leq Nl)} $ be an $ Nl\times Nl $ matrix. Its $ (\alpha ,\alpha^{\prime}) $-entry(for $ 1\leq \alpha \leq Nl ,1\leq \alpha^{\prime}\leq Nl $) is $ \tilde{H}_{N}(z,E)(\alpha ,\alpha^{\prime}) $.

Consider the holomorphic function $ det(\tilde{H}_{N}(z,E)) $ on  $ \mathcal{A}_{\rho} $. By \eqref{a1},\eqref{a2} and Hadamard's inequality,  there exists a constant $ C_{6}>0 $,
 \begin{equation}\label{key}
	|det(\tilde{H}_{N}(z,E))|\leq (C_{6}(\lambda+|E|))^{Nl}.
\end{equation} Thus $u(z)=\dfrac{1}{Nl}\log |det(\tilde{H}_{N}(z,E))|$ is  subharmonic and satisfies  \begin{equation}\label{d6}
0\leq u(z)\leq log(C_{6}(\lambda+|E|))
\end{equation}  on  $ \mathcal{A}_{\rho} $.

Let  		
	\begin{equation}\label{b4}
		D_{N}(z,E):=\frac{1}{\sqrt{1+E^{2}}}diag\{[{F}_{j}(z)-\frac{E}{\lambda}I]M_{j}(z), 1\leq j\leq N\}.
	\end{equation}	
By \eqref{b4}, let   
	\begin{equation}\label{b5}
			B_{N}(z):=\tilde{H}_{N}(z,E)-\lambda D_{N}(z,E).
	\end{equation}
Denote by $ B_{N}(z)(i,j),(i,j=1,...,N) $ the $ (i,j) $-block of $ N\times N $ block matrix $ B_{N}(z) $. By  	\eqref{b1}, \eqref{b2} , \eqref{b3} , \eqref{b4} and  \eqref{b5},		
	\begin{equation}\label{c1}		B_{N}(z)(i,j)=\left\{
		\begin{aligned}
			\frac{1}{\sqrt{1+E^{2}}}R_{j}(z),&&  i=j,  \\
			\frac{1}{\sqrt{1+E^{2}}}(-\tilde{W}_{i}^{T}(z)),&&  i=j+1,  \\
			\frac{1}{\sqrt{1+E^{2}}}(-\tilde{W}_{j}(z)),&& i=j-1, \\
			0,&& { otherwise},
		\end{aligned}\right.
	\end{equation}	
where the entries of  $ R_{j}(z), -\tilde{W}_{i}^{T}(z) $ and $ -\tilde{W}_{j}(z) $ are
the analytic continuation to   strip $ \mathcal{A}_{\rho} $ of the corresponding entries  of $ R_{j}(x), -\tilde{W}_{i}^{T}(x) $ and $ -\tilde{W}_{j}(x) $ .

By \eqref{b4},
\begin{equation}\label{c4}
	det[D_{N}(z,E)]=(\frac{1}{\sqrt{1+E^{2}}})^{Nl}\prod\limits_{j=1}^{N}det[ ({F}(z+j\omega)M(z+j\omega)-\frac{E}{\lambda}  M(z+j\omega))].
\end{equation}
By    \eqref{b5}, 	\begin{equation}\label{c5}
	\begin{aligned}
		det[\tilde{H}_{N}(z,E)]=det[\lambda D_{N}(z,E)+B_{N}(z)].
	\end{aligned}
\end{equation}
 
It follows from \eqref{a8}, \eqref{a9} and \eqref{a0} that the energy $ E $ in $ G_{N}(x,E) $ is not restricted to a compact interval of $ \mathbb{R} $. Then for constant $ C_{2} >0$, if 

(i) $ |E|\leq C_{2} \lambda $, by \eqref{a6}, 
\begin{equation}\label{b6}
	{	det}[\frac{1}{\sqrt{1+E^{2}}}(F(z)-\frac{E}{\lambda}I)M(z)]\not\equiv 0 
\end{equation}
as a function of $ z $ on the strip $ \mathcal{A}_{\rho} $.
Let $ \mu=\frac{E}{\lambda} $.
By \eqref{b6} and the anlyticity of $ {	det}[(F(z)-\mu I)M(z)] $, for all   $ 0<\delta<\rho $ and fixed $\mu$ , the number of zeros of $ {	det}[\frac{1}{\sqrt{1+E^{2}}}(F(z)-\mu I)M(z)] $ in the strip $\mathcal{B}_{\delta}= \{z:z=x+\sqrt{-1}y,x\in [0,1],\frac{\delta}{2}\leq |y|\leq 2\delta \} $  is finite. Then there is $ \varepsilon_{1}=\varepsilon_{1}(\delta)>0 $,

\begin{equation}\label{key}
	\inf\limits_{|\mu|\leq C_{2}}\sup\limits_{\frac{\delta}{2}\leq |y|\leq 2\delta} \inf\limits_{x\in[0,1]}{	det}[\frac{1}{\sqrt{1+E^{2}}}(F(z)-\mu I)M(z)]\geq \varepsilon_{1}, z=x+\sqrt{-1}y.
\end{equation}

(ii) $ |E|> C_{2}\lambda $, by \eqref{a6}, 
\begin{equation}\label{}
	{	det}[\frac{1}{\sqrt{1+E^{2}}}F(z)-\frac{E}{\sqrt{1+E^{2}}}\frac{1}{\lambda}M(z)]\not\equiv 0 
\end{equation}
as a function of $ z $ on the strip $ \mathcal{A}_{\rho} $. Let $ \mu=\frac{E}{\sqrt{1+E^{2}}} $, we have 
\begin{equation}\label{z1}
	{	det}[\sqrt{1-\mu^{2}}F(z)-\frac{\mu}{\lambda}M(z)]\not\equiv 0 
\end{equation}
as a function of $ z $ on the strip $ \mathcal{A}_{\rho} $.
By \eqref{z1} and the anlyticity of $ 	{	det}[\sqrt{1-\mu^{2}}F(z)-\frac{\mu}{\lambda}M(z)] $, for all   $ 0<\delta<\rho $ and fixed $\mu$, the number of zeros of $	{	det}[\sqrt{1-\mu^{2}}F(z)-\frac{\mu}{\lambda}M(z)] $ in the strip $\mathcal{B}_{\delta}= \{z:z=x+\sqrt{-1}y,x\in [0,1],\frac{\delta}{2}\leq |y|\leq 2\delta \} $  is finite. Then there is $ \varepsilon_{2}=\varepsilon_{2}(\delta)>0 $, 

\begin{equation}\label{key}
	\inf\limits_{|\mu|\leq 1}\sup\limits_{\frac{\delta}{2}\leq |y|\leq 2\delta} \inf\limits_{x\in[0,1]}{	det}[\sqrt{1-\mu^{2}}F(z)-\frac{\mu}{\lambda}M(z)]\geq \varepsilon_{2}, z=x+\sqrt{-1}y.
\end{equation} 
It follows from $ (i) $ and $ (ii) $ that there exists constants  $\frac{\delta}{2}\leq \delta_{0}\leq 2\delta   ,  \varepsilon_{0}=\varepsilon_{0}(\delta_{0})=min\{\varepsilon_{1}(\delta_{0}), \varepsilon_{2}(\delta_{0})\}>0$ such that for $ z\in \mathcal{C}_{\delta_{0}}:= \{z:z=x+\sqrt{-1}y,x\in [0,1], y=\delta_{0} \} $,
\begin{equation}\label{c3}
	|det[\frac{1}{\sqrt{1+E^{2}}} (F(z)M(z)-\frac{E}{\lambda}  M(z))]|\geq \varepsilon_{0}.
\end{equation}
By \eqref{c4} and \eqref{c3}, for $ z\in \mathcal{C}_{\delta_{0}}$,
\begin{equation}\label{c6}
	det[D_{N}(z)]=\prod\limits_{j=1}^{N}det[\frac{1}{\sqrt{1+E^{2}}} (F(z+j\omega)M(z+j\omega)-\frac{E}{\lambda}  M(z+j\omega))]\geq \varepsilon_{0}^{Nl}.
\end{equation}

By \eqref{c5}, 
	\begin{equation}
	det[\tilde{H}_{N}(z,E)]=det[\lambda D_{N}(z,E)+B_{N}(z)]
\end{equation}
\begin{equation}\label{c9}
	=det[\lambda D_{N}(z)(I+\lambda^{-1}D_{N}^{-1}(z,E)B_{N}(z))]\\
	=\lambda^{Nl}det[D_{N}(z,E)]det[I+\lambda^{-1}D_{N}^{-1}(z,E)B_{N}(z)].
\end{equation}
By \eqref{c6}, for $ z\in \mathcal{C}_{\delta_{0}}$ there exists a constant  $ C_{7}>0  $, 
\begin{equation}\label{c7}
	\vert	\vert D_{N}^{-1}(z,E) 	\vert	\vert=	\vert	\vert\frac{adj[D_{N}(z,E)]}{det[D_{N}(z,E)]}	\vert	\vert\leq \frac{C_{7}^{Nl}}{\varepsilon_{0}^{Nl}}=(C_{7}\varepsilon_{0}^{-1})^{Nl},
\end{equation}
where $ adj[D_{N}(z,E)] $ is the adjoint matrix of $ D_{N}(z,E) $. By \eqref{c7},  for $ z\in \mathcal{C}_{\delta_{0}}$ and  $\lambda$ large enough  there exists a constant  $ C_{8}>0  $ ,

	\begin{equation}\label{c8}
	\vert\vert\lambda^{-1}D_{N}^{-1}(z,E)B_{N}(z)\vert\vert\leq \lambda^{-1}\vert\vert D_{N}^{-1}(z,E)\vert\vert
	\vert\vert B_{N}(z)\vert\vert\leq \lambda^{-1}(C_{8}\varepsilon_{0}^{-1})^{Nl}\leq \frac{1}{2}.
\end{equation}

By \eqref{c8}, for $ z\in \mathcal{C}_{\delta_{0}}$ ,
\begin{equation}\label{d3}		
	det[I+\lambda^{-1}D_{N}^{-1}(z,E)B_{N}(z)]\geq (\frac{1}{2})^{Nl}.
\end{equation}
Combine \eqref{c6}, \eqref{c9} and \eqref{d3}, for $ z\in \mathcal{C}_{\delta_{0}}$ we have 
	\begin{equation}
	det[\tilde{H}_{N}(z,E)]\geq 
	\lambda^{Nl}\epsilon_{0}^{Nl} (\frac{1}{2})^{Nl}.
\end{equation}
Thus for $ z\in \mathcal{C}_{\delta_{0}}$,
	\begin{equation}\label{d5}
	u(z)=\frac{1}{Nl}log|det[\tilde{H}_{N}(z,E)]|\geq log\lambda +log \frac{\epsilon_{0}}{2}.
\end{equation}

By subharmonicity, there exists a constant  $  \beta\in(0,1) $  such that 		
\begin{equation}\label{d4}
	(1-\beta)\int_{z\in \mathbb{T}}^{}u(z)dz\geq \int_{z\in \mathcal{C}_{\delta_{0}}}^{}u(z)dz-\beta \int_{z\in \mathcal{C}_{\rho}}^{}u(z)dz,
\end{equation}
where $ \mathcal{C}_{\rho}:= \{z:z=x+\sqrt{-1}y,x\in [0,1], y=\rho \} $.
By \eqref{d4},  
\begin{equation}\label{d44}
	\int_{z\in \mathbb{T}}^{}u(z)dz\geq \frac{1}{(1-\beta)}(\int_{z\in \mathcal{C}_{\delta_{0}}}^{}u(z)dz-\beta \int_{z\in \mathcal{C}_{\rho}}^{}u(z)dz).
\end{equation}
By \eqref{d6} , \eqref{d5}  and \eqref{d44}, there exists a constant $ C_{9}>0 $ ,
	\begin{equation}\label{d7}
	\int_{z\in \mathbb{T}}^{}u(z)dz \geq \frac{1}{1-\beta}(log|\lambda|+log \frac{\epsilon_{0}}{2}-\beta (log\lambda+C_{9})).
\end{equation}
Thus by \eqref{d7}, there exists a constant $ C_{1}>0 $,
\begin{equation}\label{d9}
	\int_{\mathbb{T}} \dfrac{1}{Nl}\log |det(\tilde{H}_{N}(x,E))|dx \geq \log\lambda-C_{1}.
\end{equation}
\end{proof}

\section{Green's fucntion estimate and the proof of  Anderson localization}
\begin{lemma}
 	{  Let $u(x):=\dfrac{1}{Nl}\log |det(\tilde{H}_{N}(x,E))|$, for all  integers $ Q\geq C_{0}^{-2} $ , there exists   constants $  \sigma > 0, c_{10}>0,  $ and $ S>0 $,  }                                                        \begin{equation}\label{d8}
 		{ mes}\{x\in \mathbb{T}:|\frac{1}{Q}\sum_{j=0}^{Q-1}u(x+j\omega)-\int_{\mathbb{T}}u(x)dx|\ge SQ^{-\sigma}\}< e^{-c_{10}Q^{\sigma}}.
 	\end{equation}
\end{lemma}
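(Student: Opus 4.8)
The statement is the large deviation estimate of Bourgain--Goldstein type \cite{ref9} for the Birkhoff averages of a bounded subharmonic function along a Diophantine rotation, specialised to $u$, so the plan is to put $u$ in the right framework and then run that argument. The first step is to record the two structural facts that make the machinery apply. On one hand, the computations of the previous section (analytic continuation of $\det(\tilde H_N(z,E))$ to $\mathcal A_\rho$, Hadamard's inequality, and \eqref{d6}) show that $u(z)=\frac{1}{Nl}\log|\det(\tilde H_N(z,E))|$ is subharmonic on the strip $\mathcal A_\rho$ with $0\le u\le M$; using the banded structure of $\tilde H_N$ together with the normalisation by $(1+E^{2})^{-1/2}$ built into \eqref{a1}, one checks that $M$ may be chosen independent of $N$ and of $E$ --- this is exactly where the singularities of the meromorphic entries are rendered harmless. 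On the other hand, a bounded subharmonic function on a strip has, through its Riesz representation, Fourier coefficients obeying $|\widehat u(k)|\le CM/|k|$ for $k\neq0$, equivalently $\|u-\langle u\rangle\|_{\mathrm{BMO}(\mathbb T)}\le CM$ with $\langle u\rangle=\int_{\mathbb T}u$; note that $\langle u\rangle$ is precisely the quantity in \eqref{d8} and is bounded below by $\log\lambda-C_1$ via \eqref{d9}, which is why the lemma will be useful.

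Next I would carry out the Fourier splitting at a scale $K=\lceil Q^{\sigma_0}\rceil$ with $\sigma_0 A<1$, writing $u=u_{\le K}+u_{>K}$ and $\phi_Q:=\frac1Q\sum_{j=0}^{Q-1}u(\cdot+j\omega)-\langle u\rangle$. For the low-frequency part, each nonzero character of degree $\le K$ contributes a geometric sum of modulus $\le\|k\omega\|^{-1}\le C_0^{-1}|k|^{A}\le C_0^{-1}K^{A}$ by $DC_{A,C_0}$, so that
\[
\Big\|\tfrac1Q\sum_{j<Q}u_{\le K}(\cdot+j\omega)-\langle u\rangle\Big\|_\infty
\le \frac{K^{A}}{C_0 Q}\sum_{0<|k|\le K}\frac{CM}{|k|}
\le \frac{CM\,K^{A}\log K}{Q},
\]
which, since $\sigma_0 A<1$ and $Q\ge C_0^{-2}$, is $\le MQ^{-\sigma}$ for some $\sigma=\sigma(A)>0$, \emph{uniformly in $x$}; the low part thus requires no exceptional set. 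The high-frequency part is controlled through its size: $\|u_{>K}\|_{L^2}^{2}=\sum_{|k|>K}|\widehat u(k)|^{2}\le CM^{2}/K$, and since averaging along the rotation is an $L^{2}$-contraction, combining this with the Diophantine estimate above gives $\|\phi_Q\|_{L^2}\le CM\,Q^{-\sigma_1}$ for some $\sigma_1>0$. At this stage Chebyshev already yields a \emph{polynomially} small exceptional set, $\mathrm{mes}\{x:|\phi_Q(x)|>MQ^{-\sigma_1/2}\}\le CQ^{-\sigma_1}$.

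The remaining, and decisive, step is to upgrade this polynomial bound to the stretched-exponential bound in \eqref{d8}, and this is done exactly as in the Bourgain--Goldstein theory; I see two equivalent routes. The first is to estimate the high moments $\|\phi_Q\|_{L^{2m}}$ by expanding $\int_{\mathbb T}\phi_Q^{2m}$ as a multilinear Fourier sum, bounding each geometric factor by $\min\{1,(Q\|k\omega\|)^{-1}\}$ and exploiting $DC_{A,C_0}$ together with the constraint that the $2m$ frequencies sum to $0$; with careful bookkeeping this yields $L^{2m}$-bounds that retain a genuine power-of-$Q$ gain while growing only slowly in $m$, after which optimising $m\sim Q^{\sigma}$ in Markov's inequality produces \eqref{d8}. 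The second is a sub-additivity (``avalanche'') bootstrap: writing the $Q$-average as an average of $q$-averages with $q\sim Q^{\varepsilon}$ and invoking the Diophantine equidistribution of the orbit $\{iq\omega\}_i$, the polynomial bound at scale $q$ feeds back into an exponential bound at scale $Q$. Either way one obtains constants $\sigma>0$, $S>0$, $c_{10}>0$ depending only on $A$, $C_0$, the width $\rho$ and the (uniform in $N,E$) constant $M$, which is what \eqref{d8} asserts. The genuine obstacle is precisely this last passage from polynomial to stretched-exponential decay of the measure: the $L^2$/Chebyshev step is soft and gives only polynomial decay, and extracting the exponential gain is where the Diophantine hypothesis must be used quantitatively --- through the combinatorics of the higher moments or through the scale induction.
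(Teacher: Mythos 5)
Your proposal is correct and is essentially the paper's approach: the paper's entire proof of this lemma is the citation ``See \cite{ref10}'', i.e.\ it invokes exactly the standard Bourgain--Goldstein/Bourgain--Jitomirskaya large deviation theorem for Birkhoff averages of bounded subharmonic functions under a Diophantine condition, which is what you reconstruct (Fourier splitting at scale $Q^{\sigma_0}$, $DC_{A,C_0}$ for the low modes, $L^2$/BMO--John--Nirenberg or higher moments for the stretched-exponential tail). You also correctly isolate the only point specific to this paper, namely that after clearing the meromorphic denominators via $M_j$ and normalising by $(1+E^2)^{-1/2}$ the function $u$ is subharmonic on a strip with upper bound and Riesz mass uniform in $N$ and $E$ (combined with the lower bound \eqref{d9} on $\int_{\mathbb T}u$), which is what makes the cited machinery applicable here.
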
	

\begin{proof}
	See  \cite{ref10}.
\end{proof}

\begin{proposition}
	{Assume} that  $ Q $ large enough. Then for all $1\leq \alpha \leq Nl ,1\leq \alpha^{\prime}\leq Nl $ there 	exists some $ j $ with $ 1\le j\le Q $  and constant $ c_{11} $ such that 
	
	\begin{equation}\label{e6}
		G_{N}(x+j\omega,E)(\alpha ,\alpha^{\prime})< e^{-|p(\alpha)-p(\alpha^{\prime})|log(\lambda+|E|)+c_{11} Nl}
	\end{equation} 
	holds for  all $ x $ except  for a set $\Omega(E)\subset\mathbb{T}$ of  measure at most 
	$ e^{-c_{10}Q^{\sigma}}.	 $
\end{proposition}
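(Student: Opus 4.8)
The plan is to read \eqref{e6} off from three results already available --- the entrywise upper bound \eqref{e2} for the minor $\tilde{\mu}_N$, the lower bound \eqref{d9} for $\int_{\mathbb{T}}u\,dx$, and the averaging estimate \eqref{d8} for $u$ --- linked through Cramer's rule \eqref{e4}. The first step is to take logarithms in \eqref{e4}: with $q=q(\alpha)$, $p=p(\alpha')$ and $u(x)=\frac{1}{Nl}\log|\det\tilde{H}_N(x,E)|$,
\[
\frac{1}{Nl}\log|G_N(x,E)(\alpha,\alpha')|=\frac{-\log(1+E^2)}{2Nl}+\frac{\log|\phi^{F}_{qq}(x+(p+1)\omega)\phi^{R}_{qq}(x+(p+1)\omega)|}{Nl}+\frac{\log|\tilde{\mu}_N(x,E)_{(\alpha,\alpha')}|}{Nl}-u(x).
\]
The first summand is $\le 0$ and the second is $\le C/(Nl)$ since $\phi^{F}_{qq},\phi^{R}_{qq}$ are real analytic on $\mathbb{T}$ (if the argument lands on one of their finitely many zeros the term is $-\infty$ and $G_N(x,E)(\alpha,\alpha')=0$ anyway). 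For the third I would invoke \eqref{e2}, but it is cleaner first to sharpen the constant there: from the computation leading to \eqref{e2}, before the inequality $\sqrt{1+E^2}\ge|E|$ is applied, one has $\frac{1}{Nl}\log|\tilde{\mu}_N(x,E)_{(\alpha,\alpha')}|\le-\frac{|p(\alpha)-p(\alpha')|}{Nl}\log(\lambda+|E|)+\log\frac{\lambda+|E|}{\sqrt{1+E^2}}+C$, and since $\frac{\lambda+|E|}{\sqrt{1+E^2}}\le\lambda+1$ for every $E\in\mathbb{R}$ (split according to $|E|\le1$ or $|E|\ge1$), this yields, for $\lambda$ large enough that \eqref{d9} applies,
\[
\frac{1}{Nl}\log|G_N(x,E)(\alpha,\alpha')|\le-\frac{|p(\alpha)-p(\alpha')|}{Nl}\log(\lambda+|E|)+\log\lambda+C-u(x),
\]
with $C=C(W,F,R,l)$ not depending on $E$; all $x$-dependence now sits in the single term $-u(x)$.

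The second step is to feed $u$ into \eqref{d8} (which needs $Q\ge C_0^{-2}$): there is a set $\Omega(E)\subset\mathbb{T}$ with $\mathrm{mes}\,\Omega(E)<e^{-c_{10}Q^\sigma}$ such that for every $x\notin\Omega(E)$
\[
\frac{1}{Q}\sum_{j=0}^{Q-1}u(x+j\omega)\ge\int_{\mathbb{T}}u(x)\,dx-SQ^{-\sigma}\ge\log\lambda-C_1-SQ^{-\sigma},
\]
the last inequality being \eqref{d9}. The decisive observation is that $\Omega(E)$ is defined through $u$ alone, hence depends on $N,E,Q$ but not on $\alpha,\alpha'$ nor on $x$. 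Since a $Q$-term average is at most its maximum, for each $x\notin\Omega(E)$ there is an index $j=j(x)\in\{0,\dots,Q-1\}$ --- depending on $x$ but not on $\alpha,\alpha'$ --- with $u(x+j\omega)\ge\log\lambda-C_1-SQ^{-\sigma}$. Substituting this one shift into the previous display, the $\log\lambda$ cancels and, simultaneously for all $\alpha,\alpha'$,
\[
\frac{1}{Nl}\log|G_N(x+j\omega,E)(\alpha,\alpha')|\le-\frac{|p(\alpha)-p(\alpha')|}{Nl}\log(\lambda+|E|)+C+C_1+SQ^{-\sigma}.
\]
Taking $Q$ large enough that also $SQ^{-\sigma}\le1$, exponentiating, and putting $c_{11}:=C+C_1+2$ gives \eqref{e6} (with the customary understanding that the left-hand side means $|G_N(x+j\omega,E)(\alpha,\alpha')|$, the bound being trivial where the entry vanishes); a relabelling of the shift places the index in $1\le j\le Q$.

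Given these three inputs the argument is an assembly rather than a new estimate, so there is no deep obstacle. The two points that deserve care, and which carry the actual content, are: (a) that the constants --- $C$ in \eqref{e2}, $C_1$ in \eqref{d9}, and $S,\sigma,c_{10}$ in \eqref{d8} --- are uniform in $N$, so that $c_{11}$ is a genuine constant; and (b) the energy bookkeeping, which is the novel feature of the meromorphic setting: since the operator is unbounded, $E$ roams over all of $\mathbb{R}$, so one must check that the decay rate $\log(\lambda+|E|)$ stays $\ge\log\lambda>0$, that \eqref{d8} is used with $E$ fixed, and above all that the $\tfrac1{\sqrt{1+E^2}}$ normalization built into $\tilde{H}_N$ really does render the constant term $E$-independent --- which is exactly what $\frac{\lambda+|E|}{\sqrt{1+E^2}}\le\lambda+1$ secures. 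I would therefore invest the care of the write-up in (b), the remainder being routine.
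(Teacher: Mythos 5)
Your proof follows the same route as the paper's: Cramer's rule \eqref{a4}/\eqref{e4}, the minor bound \eqref{e2}, the large deviation estimate \eqref{d8}, and the mean lower bound \eqref{d9}, with the $Q$-term average forcing one good shift $j$ outside the exceptional set. Your additional step of replacing $\log(1+\lambda/|E|)$ by a bound via $\frac{\lambda+|E|}{\sqrt{1+E^{2}}}\le \lambda+1$ is a genuine tightening rather than a deviation: the paper's own intermediate constant $\tilde{C}$ still carries the term $\log(\frac{1}{\lambda}+\frac{1}{|E|})$, which is unbounded as $E\to 0$, so your bookkeeping is what actually delivers an $E$-independent $c_{11}$.
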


\begin{proof}
	By \eqref{d9} and \eqref{d8}, 
	\begin{equation}
		\frac{1}{Q}\sum_{j=1}^{Q-1}u(x+j\omega)\geq \int_{\mathbb{T}}u(x)dx-SQ^{-\sigma}\geq\log\lambda-C_{1}-SQ^{-\sigma} 
	\end{equation}  except for a set of  measure at most $ e^{-C_{10}Q^{\sigma}} $.
Moreover, there exists some $ j $ with $ 1\le j\le Q $, such that
	\begin{equation}\label{e3}
		u(x+j\omega)\geq \int_{\mathbb{T}}u(x)dx-SQ^{-\sigma}\geq\log\lambda-C_{1}-SQ^{-\sigma} 
	\end{equation}
holds except for a set of  measure at most $ e^{-C_{10}Q^{\sigma}} $. Then by \eqref{a4}, \eqref{e2} and \eqref{e3},  for 
$1\leq \alpha \leq Nl ,1\leq \alpha^{\prime}\leq Nl $ , there exists some $ j $ with $ 1\le j\le Q $ , \begin{equation}\label{e5}
	|\tilde{H}^{-1}_{N}(x+j\omega,E)(\alpha ,\alpha^{\prime})|\leq e^{-|p(\alpha)-p(\alpha^{\prime})|log(\lambda+|E|)+Nl\tilde{C}}
\end{equation} holds except for a set of  measure at most $ e^{-C_{10}Q^{\sigma}} $. $ \tilde{C}=C+C_{1}+SQ^{-\sigma}+log(\frac{1}{\lambda}+\frac{1}{|E|}) $.

 Thus by \eqref{e4} and \eqref{e5}, for 
$1\leq \alpha \leq Nl ,1\leq \alpha^{\prime}\leq Nl $ there exists a constant $ c_{11} $ and   some $ j $ with $ 1\le j\le Q $ ,
	\begin{equation}\label{key}
		|G_{N}(x,E)(\alpha,\alpha^{\prime})|\leq e^{-|p(\alpha)-p(\alpha^{\prime})|log(\lambda+|E|)+c_{11}Nl }
	\end{equation}
holds except for a set of  measure at most $ e^{-C_{10}Q^{\sigma}} $.

\end{proof}

	Assume that an integer $ N_{0}>0 $ large enough. By \eqref{e6}, for 
	$1\leq \alpha \leq N_{0}l ,1\leq \alpha^{\prime}\leq N_{0}l $, 
	\begin{equation}\label{e7}
		|G_{N_{0}}(x,E)(\alpha,\alpha^{\prime})|\leq e^{-|p(\alpha)-p(\alpha^{\prime})|log(\lambda+|E|)+c_{11}N_{0}l }
	\end{equation}
holds except for a set $ \Omega(E) $ of  measure at most $ e^{-C_{10}Q^{\sigma}} $. 

For all $1\leq \alpha \leq N_{0}l ,  1\leq \alpha^{\prime}\leq N_{0}l $, write  $ \tilde{H}_{N_{0}}(x,E)(\alpha ,\alpha^{\prime})=\sum_{k\in \mathbb{Z}}^{}\hat{v}(k) e^{ikx}, |\hat{v}(k)|<e^{-\rho|k|}$. 
Substitute $ \tilde{H}_{N_{0}}(x,E)(\alpha ,\alpha^{\prime}) $  by  $ \sum_{|k|<C_{11}N_{0}}^{}\hat{v}(k) e^{ikx} $ for a constant $ C_{11}>0$.
By \eqref{a4}, substitute $ N_{0} $ for $ N $. Then  replace \eqref{e7} by the condition 	\begin{equation}\label{e8}
	\sum_{1\leq \alpha \leq N_{0}l ,1\leq \alpha^{\prime}\leq N_{0}l}^{}e^{2|p(\alpha)-p(\alpha^{\prime})|log(\lambda+|E|)}[\tilde{\mu}_{N_{0}}(x,E)_{(\alpha,\alpha^{\prime})}]^{2}\leq e^{2N_{0}l\tilde{C}}[{ det}(\tilde{H}_{N_{0}}(x,E))]^{2}.
\end{equation}

Thus \eqref{e8} is of the form 
\begin{equation}\label{a954}
	PO(cos\omega,sin\omega,cosx,sinx,E)\geq0,
\end{equation}
where $ PO $ is a polynomial of degree at most $ C(N_{0}l)^{2} $. Furthermore, truncate power series for "cos" and "sin" and replace by a polynomial 
	\begin{equation}\label{key}
		PO_{1}(\omega,x,E)\geq0
	\end{equation}
of degree at most $ (N_{0}l)^{3} $.
\begin{lemma}
	{ For fixed $ \omega\in { DC}_{A,C_{0}} $ and $ E $, $ \Omega(E) $ in 
	\eqref{e7} does only satisfy the measure estimate $ mes(\Omega(E))<e^{-C_{10}Q^{\sigma}} $ but also be assumed semialgebraic of degree
	at most $ (N_{0}l)^{3} $.}
	\end{lemma}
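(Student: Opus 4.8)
The plan is to produce, for the fixed pair $(\omega,E)$, an explicit semialgebraic subset of $\mathbb T$ that contains the true exceptional set of \eqref{e7} up to a set of negligible measure, and then to \emph{redefine} $\Omega(E)$ to be this semialgebraic set. The reformulation needed is essentially the one already carried out above: by \eqref{a4} and \eqref{e4}, and using that the analytic prefactors $\phi^{F}_{qq},\phi^{R}_{qq}$ are bounded, the failure of \eqref{e7} for some pair $(\alpha,\alpha')$ is equivalent, up to a harmless constant in the exponent, to the failure of the single inequality \eqref{e8}. Once each matrix entry $\tilde H_{N_0}(x,E)(\alpha,\alpha')=\sum_k\hat v(k)e^{ikx}$ is replaced by its Fourier truncation $\sum_{|k|<C_{11}N_0}\hat v(k)e^{ikx}$, both sides of \eqref{e8} become trigonometric polynomials, so \eqref{e8} takes the form \eqref{a954}, a polynomial inequality in $(\cos\omega,\sin\omega,\cos x,\sin x,E)$ of degree $\le C(N_0l)^2$; truncating the power series of the trigonometric functions turns it into $PO_1(\omega,x,E)\ge 0$ with $\deg PO_1\le (N_0l)^3$ for $N_0$ large. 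One then simply sets $\Omega(E):=\{x\in\mathbb T:\ PO_1(\omega,x,E)<0\}$. Seen through $x\mapsto(\cos x,\sin x)$ this is the sublevel set of a single polynomial of degree $\le (N_0l)^3$, hence a semialgebraic set of degree at most $(N_0l)^3$, which is the first assertion.

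It remains to check that the redefined $\Omega(E)$ still obeys $\mathrm{mes}(\Omega(E))<e^{-C_{10}Q^{\sigma}}$, and the mechanism is that every truncation above costs only an error that is exponentially small in $N_0$. Truncating the Fourier series changes each entry by at most $\sum_{|k|\ge C_{11}N_0}e^{-\rho|k|}\le 2e^{-\rho C_{11}N_0/2}$. Because $\tilde H_{N_0}$ is block-tridiagonal, the determinant $\det[\tilde H_{N_0}(x,E)]$ and each minor $\tilde\mu_{N_0}(x,E)_{(\alpha,\alpha')}$ are sums of at most $(3l)^{N_0l}$ monomials, every one a product of $N_0l$ entries bounded by $C(\lambda+|E|)$ via \eqref{dd1}--\eqref{dd2}; hence replacing the entries by their truncations perturbs both sides of \eqref{e8} by at most $(C(l)(\lambda+|E|))^{N_0l}e^{-\rho C_{11}N_0/2}$, and the same bound governs the subsequent truncation of $\cos$ and $\sin$. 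Since $\omega$ and $E$ are fixed, I may take $C_{11}=C_{11}(l,\lambda,|E|)$ large enough that this perturbation is smaller than the slack already built into the passage from \eqref{e7} to \eqref{e8}. Consequently $\{x:PO_1(\omega,x,E)<0\}$ is contained in the set where \eqref{e7} fails with the constant $c_{11}$ replaced by $c_{11}+1$, and that set has measure $<e^{-C_{10}Q^{\sigma}}$ by re-running the Green's function estimate \eqref{e6} with this slightly larger constant, which affects only $c_{11}$ and not $C_{10}$. This gives the measure bound for $\Omega(E)$ and completes the proof.

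The step I expect to require the most care is the ordering of the constants: the truncation length $C_{11}N_0$ must be chosen \emph{after}, and so as to dominate, all the $N_0$-exponential constants appearing in the minor bound \eqref{e2}, the determinant lower bound \eqref{d9}, and the Green's function estimate \eqref{e6}, so that the truncation error is genuinely subdominant and cannot spoil the measure estimate; granting this hierarchy, the remainder is bookkeeping. A secondary point is that the degree count must absorb, besides the $C(N_0l)^2$ coming from \eqref{a954}, also the contribution of the analytic prefactors $\phi^{F}_{qq}\phi^{R}_{qq}$ in \eqref{e4} and of the Taylor truncation of the trigonometric functions; since each of these is polynomial in $N_0$, the clean bound $(N_0l)^3$ holds once $N_0$ is large, which is precisely the regime in which the lemma is applied.
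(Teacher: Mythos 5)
Your proposal is correct and follows essentially the same route as the paper: the paper performs the identical reformulation (Fourier truncation of the entries, passage from \eqref{e7} to the single polynomial inequality \eqref{e8}, hence \eqref{a954}, then Taylor truncation of $\cos$ and $\sin$ to get $PO_1$ of degree $\le (N_0l)^3$) in the text preceding the lemma and then simply cites Bourgain's book for the rest. Your write-up merely makes explicit the exponential smallness of the truncation errors and the ordering of constants, which the paper leaves implicit.
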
	
\begin{proof}
	See \cite{ref4}.
\end{proof}	
Let $  [u,v]\subset\mathbb{Z}$. Let 
\begin{equation}\label{a00}
	H_{[u,v]}(x):=R_{[u,v]}H_{\lambda}(x)R_{[u,v]},
\end{equation}
where $ R_{[u,v]} =$ coordinate restriction to $[u,v]\subset\mathbb{Z}$, and the associated Green's function $ G_{[u,v]}(x,E)=[H_{[u,v]}(x)-E)]^{-1} $.	
\begin{lemma}
	{ Fix $ x_{0}\in \mathbb{T} $.  Consider the orbit $\{x_{0}+j\omega: |j|\leq N_{1}\}$ where $ N_{1}=N_{0}^{C_{12}} $large enough  for a constant $ C_{12}>0 $.  Then except for at most $ N_{1}^{1-\sigma_{1}} $ values of $ |j|<N_{1} $, for all $-N_{0}l\leq \alpha \leq N_{0}l $ and $   -N_{0}l\leq \alpha^{\prime}\leq N_{0}l $,
		\begin{equation}\label{e77}
		|G_{[-N_{0},N_{0}]}(x_{0}+j\omega,E)(\alpha,\alpha^{\prime})|\leq e^{-|p(\alpha)-p(\alpha^{\prime})|log(\lambda+|E|)+c_{11}N_{0}l },
	\end{equation}
where $ \sigma_{1} $ is a constant with $ 0<\sigma_{1}<1 $.}

\end{lemma}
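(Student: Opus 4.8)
The plan is to deduce this lemma from the semialgebraic structure of the bad set $\Omega(E)$ established in the preceding lemma, together with the measure estimate $\mathrm{mes}(\Omega(E)) < e^{-C_{10}Q^\sigma}$, via a standard Bourgain--Goldstein--Jitomirskaya transversality argument. The point is that we have a single ``good box'' statement at scale $N_0$: for $x \notin \Omega(E)$ the Green's function $G_{[-N_0,N_0]}(x,E)$ decays at the rate $e^{-|p(\alpha)-p(\alpha')|\log(\lambda+|E|)+c_{11}N_0 l}$. Translating the box along the orbit, the box centered at $x_0 + j\omega$ is good precisely when $x_0 + j\omega \notin \Omega(E)$. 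So the task reduces to controlling the number of $j$ with $|j| < N_1$ for which $x_0 + j\omega \in \Omega(E)$.

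First I would invoke Lemma (the one citing \cite{ref4}): $\Omega(E)$ is semialgebraic of degree at most $B := (N_0 l)^3$ and has measure at most $e^{-C_{10}Q^\sigma}$. The key input is the quantitative estimate on the intersection of a semialgebraic set of small measure with an arithmetic progression on the circle: if $S \subset \mathbb{T}$ is semialgebraic of degree $B$ with $\mathrm{mes}(S) < e^{-B^\delta}$ for an appropriate $\delta$, and $\omega \in \mathrm{DC}_{A,C_0}$, then for $N_1$ in a suitable range (polynomial in $B$, hence polynomial in $N_0$, which is why one sets $N_1 = N_0^{C_{12}}$) one has $\#\{|j| < N_1 : x_0 + j\omega \in S\} < N_1^{1-\sigma_1}$ for some $\sigma_1 \in (0,1)$. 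This is exactly Bourgain's quantitative transversality lemma (Chapter 9--10 of his book), and it is legitimate to cite it. One checks the hypotheses: the degree is $B = (N_0 l)^3$; the Diophantine condition on $\omega$ gives the needed lower bound on $\|j\omega\|$; and the measure bound $e^{-C_{10}Q^\sigma}$ must be smaller than $e^{-B^\delta}$, which forces a choice of $Q$ (recall $Q$ was ``large enough'' and we are free to take $Q$ a suitable power of $N_0$, so that $Q^\sigma \gg (N_0 l)^{3\delta}$).

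Then I would simply combine: the set of ``bad'' $j$ with $|j| < N_1$ is contained in $\{|j| < N_1 : x_0 + j\omega \in \Omega(E)\}$, which by the transversality lemma has cardinality at most $N_1^{1-\sigma_1}$; for every other $j$, since $x_0 + j\omega \notin \Omega(E)$, the estimate \eqref{e7} applied at the point $x_0 + j\omega$ (with the box $[-N_0, N_0]$ in place of $[1,N_0]$, which only changes the labeling $\alpha \mapsto p(\alpha)$ by a shift and does not affect the argument of Propositions in Sections 2--3) yields precisely \eqref{e77}. The matrix $\tilde H_{N_0}$ entering \eqref{e7} was already truncated to a trigonometric polynomial of controlled degree, so the semialgebraic description is internally consistent with the box at scale $N_0$.

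The main obstacle is purely bookkeeping of the parameter hierarchy: one must verify that the chain $Q \ll N_0$, $N_1 = N_0^{C_{12}}$, degree $(N_0 l)^3$, and measure $e^{-C_{10}Q^\sigma}$ can be arranged so that Bourgain's transversality estimate applies with a genuine exponent $\sigma_1 > 0$ and an admissible range of $N_1$ — in particular that $N_1$ is neither too small (so that $N_1^{1-\sigma_1}$ is a meaningful saving) nor too large relative to the degree (so that the semialgebraic set theory still controls the intersection). There is no new analytic difficulty here beyond what is already in \cite{ref9, ref10, ref2}; the meromorphic nature of the potential has been entirely absorbed into the Green's function bound \eqref{e7} via Propositions in Sections 2 and 3, so this step is a verbatim transcription of the analytic case.
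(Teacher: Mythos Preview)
Your proposal is correct and matches the paper's approach exactly: the paper's own proof is simply ``See \cite{ref4}'', i.e.\ a direct citation of Bourgain's book, and what you have written is precisely the content of that citation --- using the semialgebraic description of $\Omega(E)$ of degree $\le (N_0 l)^3$ together with the measure bound and Bourgain's quantitative transversality lemma for Diophantine orbits to bound $\#\{|j|<N_1:x_0+j\omega\in\Omega(E)\}$ by $N_1^{1-\sigma_1}$. Your parameter-hierarchy remarks are accurate and in fact more detailed than anything the paper spells out.
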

\begin{proof}
	See \cite{ref4}.
\end{proof}	
Assume that $ \varphi=(\vec{\varphi}_{n})_{n\in \mathbb{Z}} $ satisfies 
\begin{equation}\label{key}
	\vert\vert \vec{\varphi}_{n}\vert\vert_{2}<C_{13}|n|, |n|\rightarrow \infty 
\end{equation}
for a constant $ C_{13}>0 $.  For fixed $ x_{0} $ in \eqref{e77},
\begin{equation}\label{f2}
	(H_{\lambda}(x_{0})-E)\varphi=0.
\end{equation}

 By \eqref{a00} and \eqref{f2}, 

\begin{equation}\label{f3}
	(R_{[u,v]}(H_{\lambda}(x_{0})-E)R_{[u,v]})\varphi=-(R_{[u,v]}(H(x_{0}))R_{Z\setminus([u,v])}) \varphi=\left\{
	\begin{aligned}
		\vec{\varphi}_{j}=W^{T}_{u}(x)\vec{\varphi}_{u-1},  j=u,  \\
		\vec{\varphi}_{j}=W_{v+1}(x)\vec{\varphi}_{v+1},  j=v,  \\
		\vec{\varphi}_{j}=\vec{0},{ j\in\mathbb{Z}\setminus\{u,v\}}.
	\end{aligned}\right.
\end{equation}
For 
$-ul\leq p_{1} \leq ul ,-ul\leq p_{2}\leq ul $, by $ G_{N}(x,E)(p_{1} ,p_{2}) $ and  $ G_{[u,v]}(x,E)(p_{1} ,p_{2}) $ denote the 	$ (p_{1} ,p_{2}) $-block of  $ N\times N $ block matrix $ G_{N}(x,E) $ and $ G_{[u,v]}(x,E) $ respectively.
Hence by \eqref{f3}, for $ n\in [u,v] $, 

\begin{equation}\label{key}
	\vert\vert \vec{\varphi}_{n}\vert\vert_{2}<\mathop{max}\limits_{p_{1}\in\{u,v\}}
	\vert\vert G_{[u,v]}(x_{0},E)(p_{1} ,p_{2})   \vert\vert_{\infty} (\mathop{max}\limits_{|k|<N_{1}+N_{0}}\vert\vert \vec{\varphi}_{k}\vert\vert_{2})
\end{equation}

\begin{equation}\label{key}
	<\mathop{max}\limits_{p_{1}\in\{u,v\}}
\vert\vert G_{[-N_{0},N_{0}]}(x_{0}+j\omega,E)(p_{1}-j,p_{2}-j)   \vert\vert_{\infty} N_{1}.
\end{equation}
By \eqref{e77}, we have 

\begin{equation}\label{f5}
	\vert\vert \vec{\varphi}_{n}\vert\vert_{2}<N_{1}e^{c_{14}N_{0}l}(e^{-log(\lambda+|E|)|u-n|}+e^{-log(\lambda+|E|)|v-n|})
\end{equation}
for a constant $ c_{14}>0 $.
Let $ [u,v]=[-N_{0}+j,N_{0}+j] $ and $ n=j $. By \eqref{f5}, we have $ |j-u|\geq N_{0}/2, |j-v|\geq N_{0}/2 $. 

By \eqref{e77}, there exists a constant $ c_{15}=c_{15}(\lambda,E,l)>0 $, 
\begin{equation}\label{f6}
	\vert\vert \vec{\varphi}_{j}\vert\vert_{2}<e^{-c_{15}N_{0}}
\end{equation}
holds for all $ |j|<N_{1} $ except $ N_{1}^{1-\sigma_{1}} $ many.

\begin{proposition}
{ Denote \begin{equation}\label{key}
		\mathcal{E}=\mathcal{E}_{\omega}=\mathop{\bigcup}_{|j|\le N_{1}}SpecH_{[-j,j]}(x_{0}),
	\end{equation} where  Spec$H_{[-j,j]}(x_{0}) $ is the spectrum of $ H_{[-j,j]}(x_{0}) $.
	Then if 	\begin{equation}
	x\notin \mathop{\bigcup}_{E^{\prime}\in\mathcal{E}_{\omega}}\Omega(E^{\prime}),
\end{equation}for all $-N_{0}l\leq \alpha \leq N_{0}l ,  -N_{0}l\leq \alpha^{\prime}\leq N_{0}l $
we have 

	\begin{equation}\label{e777}
	|G_{[-N_{0},N_{0}]}(x,E)(\alpha,\alpha^{\prime})|\leq e^{-|p(\alpha)-p(\alpha^{\prime})|log(\lambda+|E|)+c_{11}N_{0}l }.
\end{equation}

}
	
\end{proposition}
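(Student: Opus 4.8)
The plan is to transfer the Green's function estimate \eqref{e6}, which holds for the half-line restriction $H_N$ on $[1,N]$ and for most translates $x+j\omega$ ($1\le j\le Q$) outside a small set $\Omega(E)$, into the estimate \eqref{e777} for the symmetrically indexed block $H_{[-N_0,N_0]}$ at the single point $x$. The key observation is that $G_{[-N_0,N_0]}(x,E)$ and $G_{[1,2N_0+1]}(x-N_0\omega,E)$ are conjugate by the shift, so the proposition at the symmetric scale $[-N_0,N_0]$ is literally a relabeling of Proposition~(\ref{e6}) with $N=2N_0+1$; the content to be proved is thus really \emph{which} $x$ work, namely that the exceptional set can be taken to be $\bigcup_{E'\in\mathcal{E}_\omega}\Omega(E')$ rather than a set depending on the particular energy $E$.

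First I would observe that for any $E'$ not in the spectrum $\mathcal{E}_\omega$ of the relevant finite blocks, $\det(\tilde H_{N_0}(\cdot,E'))\not\equiv 0$, so the large-deviation machinery of Lemma~(\ref{d8}) together with the lower bound Proposition~(\ref{d9}) applies and produces, for each such $E'$, a set $\Omega(E')$ of measure at most $e^{-c_{10}Q^\sigma}$ off of which \eqref{e6} holds. The point is that for the \emph{fixed} energy $E$ appearing in \eqref{f2} and \eqref{e777} — which is the energy we want a generalized eigenfunction for — we may approximate $E$ by, or rather compare it against, the finitely many energies $E'\in\mathcal{E}_\omega$; more precisely, the resolvent identity expresses $G_{[-N_0,N_0]}(x,E)$ in terms of $G_{[-N_0,N_0]}(x,E')$ for a nearby $E'\in\mathcal{E}_\omega$, and since $|\mathcal{E}_\omega|\le C N_1$ the union $\bigcup_{E'\in\mathcal{E}_\omega}\Omega(E')$ still has measure at most $C N_1 e^{-c_{10}Q^\sigma}$, which is negligible once $Q$ is chosen as a suitable power of $N_0$ and $N_1=N_0^{C_{12}}$. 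So for $x$ outside this union, \eqref{e6} holds \emph{simultaneously} for all $E'\in\mathcal{E}_\omega$, and then a perturbation/resolvent argument upgrades it to the bound at the actual energy $E$.

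Next I would carry out the shift step: the translation invariance $\tilde H_{N}(x,E) = $ (shift of) $\tilde H_{[-N_0,N_0]}(x,E)$ identifies the $\alpha,\alpha'$ entry of $G_{[-N_0,N_0]}(x,E)$ with the corresponding entry of $G_{N_0'}(x-N_0\omega,E)$ for $N_0'=2N_0+1$, and the exponent $|p(\alpha)-p(\alpha')|$ is preserved under this relabeling because it only measures the block-distance. Finally, since $x\notin\bigcup_{E'}\Omega(E')$ means in particular that the conclusion of Proposition~(\ref{e6}) holds at $x$ for the energies in $\mathcal{E}_\omega$, and since there is some $j$ with $1\le j\le Q$ realizing \eqref{e6}, I would use the fact that for this $x$ we may actually take the bound at $j$ such that $x$ itself (not a translate) works — this is exactly the role played earlier in passing from \eqref{e6} to the single-point statements \eqref{e7}, \eqref{e77} — to obtain \eqref{e777} directly at $x$.

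The main obstacle I anticipate is the energy-approximation step: the operator is unbounded because of the meromorphic diagonal terms, so $E$ ranges over all of $\mathbb{R}$ rather than a compact interval, and one must be careful that the resolvent comparison $G_{[-N_0,N_0]}(x,E)$ versus $G_{[-N_0,N_0]}(x,E')$ does not lose the exponential smallness — this is presumably where the normalization by $\tfrac{1}{\sqrt{1+E^2}}M_j(x)$ built into $\tilde H_N$ does its work, since it renders the relevant determinants and minors comparable uniformly in $E$, as already exploited in Propositions~(\ref{e2}) and (\ref{d9}). Handling the distance $\mathrm{dist}(E,\mathcal{E}_\omega)$ — ensuring it is not too small, or absorbing a polynomially small factor into the exponentially large room $e^{c_{11}N_0 l}$ — is the delicate bookkeeping point. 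Everything else (the shift, the measure union bound, the semialgebraic count) is routine given the lemmas quoted from \cite{ref4} and \cite{ref10}.
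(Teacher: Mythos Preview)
Your overall strategy—replace the generalized-eigenvalue energy $E$ by some $E'\in\mathcal{E}_\omega$, use that $x\notin\Omega(E')$ to get the estimate at $E'$, then perturb back to $E$—is indeed the paper's strategy. But you have skipped, and in your last paragraph seem to misidentify, the one substantive step: \emph{proving} that $\mathrm{dist}(E,\mathcal{E}_\omega)$ is exponentially small. You treat the proximity of $E$ to $\mathcal{E}_\omega$ as given (``a nearby $E'\in\mathcal{E}_\omega$'') and then worry about whether the comparison loses constants; in fact, without further input an arbitrary $E\in\mathbb{R}$ has no reason to be anywhere near the finite set $\mathcal{E}_\omega$, and the resolvent identity buys you nothing.

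The paper supplies this missing input by using the eigenfunction $\varphi$ and the already-established decay \eqref{f6}. Normalize $\|\vec\varphi_0\|_2=1$, choose $j_0\le N_1$ so that \eqref{f6} holds at both $\pm j_0$ (possible since at most $N_1^{1-\sigma_1}$ sites fail), and apply the restriction identity \eqref{f3} on $[-j_0+1,j_0-1]$ to get
\[
1=\|\vec\varphi_0\|_2 \le C\,\|G_{[-j_0+1,j_0-1]}(x_0,E)\|_\infty\bigl(\|\vec\varphi_{j_0}\|_2+\|\vec\varphi_{-j_0}\|_2\bigr) < C\,\|G_{[-j_0+1,j_0-1]}(x_0,E)\|_\infty\, e^{-c_{15}N_0}.
\]
Thus $\|G_{[-j_0+1,j_0-1]}(x_0,E)\|$ is exponentially large, which forces $\mathrm{dist}\bigl(E,\mathrm{Spec}\,H_{[-j_0,j_0]}(x_0)\bigr)<e^{-cN_0}$, i.e.\ $E$ is $e^{-cN_0}$-close to some $E'\in\mathcal{E}_\omega$. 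After this, the perturbation from $E'$ to $E$ is trivial (the difference is an $e^{-cN_0}$-small multiple of the identity), and the unboundedness of the operator and the normalization $\tfrac{1}{\sqrt{1+E^2}}M_j$ play no special role at this step. Your phrase ``ensuring it is not too small'' has the direction reversed: smallness of the distance is exactly what is needed and exactly what the eigenfunction argument delivers.
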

\begin{proof}Let $ j_{0} $ be an positive integer. By \eqref{f3}, 
	\begin{equation}\label{f333}
		(R_{[-j_{0}+1,j_{0}-1]}(H_{\lambda}(x_{0})-E)R_{[-j_{0}+1,j_{0}-1]}\varphi=\left\{
		\begin{aligned}
			\vec{\varphi}_{j}=W^{T}_{-j_{0}+1}(x)\vec{\varphi}_{-j_{0}},  j=-j_{0}+1,  \\
			\vec{\varphi}_{j}=W_{j_{0}}(x)\vec{\varphi}_{j_{0}},  j=j_{0}-1,  \\
			\vec{\varphi}_{j}=\vec{0},{ j\in\mathbb{Z}\setminus\{-j_{0}+1,j_{0}-1\}}.
		\end{aligned}\right.
	\end{equation}
Then  we have 
	\begin{equation}
\begin{aligned}
		1=\vert\vert\vec{\varphi}_{0}\vert\vert_{2}
	\leq  
	\vert\vert G_{[-j_{0}+1,j_{0}-1]}(x_{0},E)(0,j_{0}-1)\vert\vert_{\infty} \vert\vert\vec{\varphi}_{j_{0}}\vert\vert_{2}\\+
	\vert\vert G_{[-j_{0}+1,j_{0}-1]}(x_{0},E)(0,-j_{0}+1)\vert\vert_{\infty}  \vert\vert\vec{\varphi}_{-j_{0}}\vert\vert_{2}\\
\leq	\vert\vert G_{[-j_{0}+1,j_{0}-1]}(x_{0},E)\vert\vert_{\infty}(\vert\vert\vec{\varphi}_{j_{0}}\vert\vert_{2}+\vert\vert\vec{\varphi}_{-j_{0}}\vert\vert_{2}).
\end{aligned}
\end{equation}
If $  j_{0} $ and $ -j_{0} $ both satisfy \eqref{f6}, we have 

\begin{equation}\label{key}
		\vert\vert G_{[-j_{0},j_{0}]}(x_{0},E)\vert\vert_{\infty}>e^{-c_{15}N_{0}},
\end{equation}
or equivalently 
\begin{equation}\label{555}
	dist(E,SpecH_{[-j_{0},j_{0}]})<e^{-\frac{c_{0}}{2}N_{0}}.
\end{equation}By \eqref{f5} and \eqref{555}, we have \eqref{e777} holds.
\end{proof}

\begin{lemma}
	{ Let $ N_{2}=N_{0}^{c_{16}} $ with $ c_{16}>0 $ a sufficiently large constant. Then for all $\omega\in{ DC}_{A,C_{0}}$ except for a set of  measure at most $ N_{2}^{-1/13} $ we have 
	\begin{equation}\label{f8}
		x_{0}+n\omega\notin \mathop{\bigcup}_{E^{\prime}\in\mathcal{E}_{\omega}}\Omega(E^{\prime}) 
	\end{equation}
for all $ N_{2}^{1/2}<|n|<2N_{2} $.
\begin{proof}
	See \cite{ref4}.
\end{proof}

}
\end{lemma}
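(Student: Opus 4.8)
The plan is to reduce the statement to the semi-algebraic ``elimination of $\omega$'' machinery of \cite{ref4}. First I would set $\mathcal{S}=\mathcal{S}_{N_{0}}:=\{(x,\omega)\in\mathbb{T}^{2}:\ x\in\bigcup_{E'\in\mathcal{E}_{\omega}}\Omega(E')\}$, so that \eqref{f8} becomes precisely the assertion that, for all $\omega\in DC_{A,C_{0}}$ outside a set of measure $<N_{2}^{-1/13}$, the Diophantine orbit segment $\{x_{0}+n\omega:\ N_{2}^{1/2}<|n|<2N_{2}\}$ avoids the $\omega$-section $\mathcal{S}_{\omega}$. The crux is to verify that $\mathcal{S}$ is a semi-algebraic subset of $\mathbb{T}^{2}$ of degree at most $N_{0}^{C}$ for some $C=C(l,A)$, while its $x$-sections are very thin, $\mathrm{mes}(\mathcal{S}_{\omega})<N_{1}^{C}e^{-c_{10}Q^{\sigma}}$ uniformly in $\omega$.

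For the degree bound I would combine two facts. On the one hand, each $\Omega(E')$ is already known to be semi-algebraic of degree at most $(N_{0}l)^{3}$ (the lemma proved above on $\Omega(E)$, obtained by truncating the Fourier series of $\tilde{H}_{N_{0}}$ and passing from \eqref{e8} to the polynomial inequality $PO_{1}$). On the other hand, $\mathcal{E}_{\omega}=\bigcup_{|j|\le N_{1}}\mathrm{Spec}\,H_{[-j,j]}(x_{0})$ has to be exhibited as a zero set, in the variable $E'$, of a controlled number of polynomials, and this is where the meromorphic diagonal has to be dealt with. I would use the $M$-normalization of \eqref{a1}--\eqref{a2}: exactly as in the definition of $\tilde{H}_{N}$, multiplying the columns of $H_{[-j,j]}(x_{0})-E'I$ by the diagonal entries of $\mathrm{diag}\{M_{i}(x_{0})\}$ clears every meromorphic denominator, so that $\det\tilde{H}_{[-j,j]}(x_{0},E')$ equals, up to the innocuous factor $(1+E'^{2})^{-(2j+1)l/2}$, a polynomial of degree at most $(2N_{1}+1)l$ in $E'$ with analytic-in-$\omega$ coefficients; for all but finitely many $\omega$ its zero set in $E'$ is precisely $\mathrm{Spec}\,H_{[-j,j]}(x_{0})$, and truncating the Fourier expansions in $\omega$ makes it a polynomial of degree at most $N_{0}^{C}$. (The $(1+E'^{2})^{-1/2}$ rescaling is exactly what confines the relevant energy parameter to a compact set although $E'$ itself ranges over all of $\mathbb{R}$.) Now $\mathcal{S}$ is obtained from the sets $\{E':\det\tilde{H}_{[-j,j]}(x_{0},E')=0\}$ and from the $\Omega(E')$ by the existential quantifier ``$\exists E'$'', so by Tarski--Seidenberg it is semi-algebraic of degree at most $N_{0}^{C}$ — the counts $\#\mathcal{E}_{\omega}\le CN_{1}^{2}l$ and $2N_{1}+1$ being polynomial in $N_{0}$ since $N_{1}=N_{0}^{C_{12}}$. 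The section bound follows from \eqref{e7}: $\mathrm{mes}(\mathcal{S}_{\omega})\le\#\mathcal{E}_{\omega}\cdot\max_{E'}\mathrm{mes}(\Omega(E'))<CN_{1}^{2}l\,e^{-c_{10}Q^{\sigma}}$, and choosing $Q$ large enough (it suffices that $c_{10}Q^{\sigma}$ exceed a constant multiple of $\log N_{0}$) and then $c_{16}$ large enough makes this smaller than $N_{2}^{-C'}$ for the constant $C'=C'(A,C_{0})$ needed below, while also securing the complexity-versus-measure hypothesis of the cited elimination lemma.

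With $\mathcal{S}$ so prepared, I would invoke the ``elimination of $\omega$'' lemma of \cite{ref4} — the semi-algebraic transversality tool that underlies the lemmas invoked above: if $\mathcal{S}\subset\mathbb{T}^{2}$ is semi-algebraic of degree $B<N_{2}^{c'}$ with $\mathrm{mes}_{2}(\mathcal{S})<N_{2}^{-C'}$, then for $\omega\in DC_{A,C_{0}}$ one has $\mathrm{mes}\{\omega:\ \exists\,n,\ N_{2}^{1/2}<|n|<2N_{2},\ (x_{0}+n\omega,\omega)\in\mathcal{S}\}<N_{2}^{-1/13}$, and unwinding the definition of $\mathcal{S}$ this is exactly \eqref{f8}. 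The step I expect to be the main obstacle is not this final application, which is black-boxed in \cite{ref4}, but the reduction in the previous paragraph: checking that the meromorphic diagonal — once treated via the $M$-normalization and the $(1+E'^{2})^{-1/2}$ rescaling — really leaves $\det\tilde{H}_{[-j,j]}(x_{0},E')$, and hence $\mathcal{S}$, a semi-algebraic object of degree polynomial in $N_{0}$ uniformly over the full non-compact range of the energy $E'$, and that the section measure $N_{1}^{C}e^{-c_{10}Q^{\sigma}}$ is small enough relative to that degree for the cited lemma to apply.
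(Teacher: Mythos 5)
The paper offers no proof of this lemma beyond the citation ``See \cite{ref4}'', and your proposal is a correct reconstruction of exactly the argument that citation is standing in for: form the semi-algebraic set $\mathcal{S}=\{(x,\omega):x\in\bigcup_{E'\in\mathcal{E}_{\omega}}\Omega(E')\}$, control its degree (polynomial in $N_{0}$, using the earlier semi-algebraicity of $\Omega(E)$, the polynomial description of the spectra, and Tarski--Seidenberg to eliminate $E'$) and its measure (via \eqref{e7} and the count $\#\mathcal{E}_{\omega}\lesssim N_{1}^{2}l$), then apply Bourgain's elimination-of-$\omega$ transversality lemma to the orbit $x_{0}+n\omega$. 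Your handling of the meromorphic diagonal and the non-compact energy range through the $M$-normalization and the $(1+E'^{2})^{-1/2}$ rescaling supplies a detail the paper leaves entirely implicit, and it is the right way to make the reduction go through.
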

For each $ N_{2}^{1/2}<|n|<N_{2} $      and  for all $(-N_{0}+n)l\leq \alpha \leq (N_{0}+n)l ,  (-N_{0}+n)l\leq \alpha^{\prime}\leq (N_{0}+n)l $,  we have 
	\begin{equation}\label{eb777}
	|G_{[-N_{0}+n,N_{0}+n]}(x_{0},E)(\alpha,\alpha^{\prime})|\leq e^{-|p(\alpha)-p(\alpha^{\prime})|log(\lambda+|E|)+c_{11}N_{0}l }.
\end{equation}
Define the interval 
	\begin{equation}\label{f7}
		\Lambda=\mathop{\bigcup}_{N_{2}^{\frac{1}{2}}<n<2N_{2}}[-N_{0}+n,N_{0}+n]\supset [N_{2}^{\frac{1}{2}},2N_{2}].
\end{equation}
Apply the resolvent identity on \eqref{f7}. Then for all $ \alpha,\alpha^{\prime} $ with  $p(\alpha),p(\alpha^{\prime})\in \Lambda $, 
\begin{equation}\label{ea77}
	|G_{\Lambda}(x_{0},E)(\alpha,\alpha^{\prime})|\leq e^{-|p(\alpha)-p(\alpha^{\prime})|log(\lambda+|E|)},  |p(\alpha)-p(\alpha^{\prime})|>\frac{1}{10}N_{2},
\end{equation} where $ G_{\Lambda}(x_{0},E)=(R_{\Lambda}H_{\lambda}(x_{0})R_{\Lambda}-E)^{-1} $. Therefore by \eqref{f6} we have  
	
	\begin{equation}\label{key}
		\vert\vert \vec{\varphi}_{j}\vert\vert_{2}<e^{-c_{15}j}, \frac{1}{2}N_{2}<|j|<N_{2}.
	\end{equation}
For  integer $ j<0 $ , the process is the same.
	By\eqref{f8}, for fixed  $ N_{0}$, for all $\omega\in{ DC}_{A,C_{0}}$ except for a set $ \mathcal{R}_{(N_{0})} $ of  measure at most $ N_{0}^{-10} $,

		\begin{equation}\label{key}
		\vert\vert \vec{\varphi}_{j}\vert\vert_{2}<e^{-c_{15}j}, \frac{1}{2}N_{2}<|j|<N_{2}.
	\end{equation} 
	Let 
	
	\begin{equation}
		\mathcal{R}=\mathop{\bigcap}_{N}\mathop{\bigcup}_{N_{0}>N} \mathcal{R}_{(N_{0})}.
	\end{equation}
	Then    \begin{equation}\label{key}
		mes(\mathcal{R})\leq \mathop{lim}\limits_{N\rightarrow\infty}\sum_{N_{0}=N+1}^{+\infty}  mes  (\mathcal{R}_{(N_{0})})=0.
	\end{equation}

	Finally for fixed $ x_{0}\in \mathbb{T} $, Anderson localization holds for a.e $ \omega \in $ DC$ _{A,C_{0}} $.

	\clearpage

	\bibliographystyle{alpha}

\begin{thebibliography}{99} 
	\bibitem{ref15}Sinai Y G. Anderson localization for one-dimensional difference Schrödinger operator with quasiperiodic potential[J]. Journal of statistical physics, 1987, 46(5-6): 861-909.
	
	\bibitem{ref16}Fröhlich J, Spencer T, Wittwer P. Localization for a class of one-dimensional quasi-periodic Schrödinger operators[J]. 1990.
	
	 \bibitem{ref17}Eliasson L H. Discrete one-dimensional quasi-periodic Schrödinger operators with pure point spectrum[J]. 1997.
	
	\bibitem{ref18}Jitomirskaya S Y. Anderson localization for the almost Mathieu equation: a nonperturbative proof[J]. Communications in Mathematical Physics, 1994, 165: 49-57.
	
	\bibitem{ref19}Jitomirskaya S Y. Metal-insulator transition for the almost Mathieu operator[J]. Annals of Mathematics, 1999: 1159-1175.
	
		\bibitem{ref9}Bourgain J, Goldstein M. On nonperturbative localization with quasi-periodic potential[J]. Annals of Mathematics, 2000, 152(3): 835-879.
	
\bibitem{ref4}Bourgain J. Green's Function Estimates for Lattice Schrödinger Operators and Applications.(AM-158)[M]. Princeton University Press, 2004.	
	
	
	
	
	\bibitem{ref10}Bourgain J, Jitomirskaya S. Anderson localization for the band model[J]. Geometric aspects of functional analysis, 2000: 67-79.	
	
	\bibitem{ref2}Klein S. Anderson localization for one-frequency quasi-periodic block Jacobi operators[J]. Journal of Functional Analysis, 2017, 273(3): 1140-1164.
	
	\bibitem{ref12}
Jian W, Shi Y, Yuan X. Anderson localization for one-frequency quasi-periodic block operators with long-range interactions[J]. Journal of Mathematical Physics, 2019, 60(6): 063504.	
	
	
	
	
	
	
	 \bibitem{ref14}Anderson P W. Absence of diffusion in certain random lattices[J]. Physical review, 1958, 109(5): 1492.
	
	\bibitem{ref7}Lagendijk A, Van Tiggelen B, Wiersma D S. Fifty years of Anderson localization[J]. Phys. today, 2009, 62(8): 24-29.
	
	
	
		\bibitem{ref11}Bourgain J. Anderson Localization for Quasi-Periodic Lattice Schrödinger Operators on ${\mathbb {Z}}^{d} $, d Arbitrary[J]. GAFA Geometric And Functional Analysis, 2007, 17(3): 682-706.
	
	
	
	
		
	

	

	

	

	
	
	

	
	

	\bibitem{13}
	Silvius Klein,
	Anderson localization for the discrete one-dimensional quasi-periodic Schrödinger operator with potential defined by a Gevrey-class function,
	Journal of Functional Analysis,
	Volume 218, Issue 2,
	2005,
	Pages 255-292.
	
	\bibitem{ref5}Jian W, Shi J, Yuan X. Anderson localization for long-range operators with singular potentials[J]. Journal of Mathematical Physics, 2021, 62(2): 022703.
	
	\bibitem{ref1} Shnol' È È.On the behavior of the eigenfunctions of Schrodinger's equation[J] Matematicheskii Sbornik, 1957, 84(3): 273-286.
	
	\bibitem{ref3}Han R. Shnol’s theorem and the spectrum of long range operators[J]. Proceedings of the American Mathematical Society, 2019, 147(7): 2887-2897.
	
	
	
	
	\bibitem{ref6}Aigner M, Axler S. A course in enumeration[M]. Berlin: Springer, 2007.
	
	
	
	
\end{thebibliography}

% see references.bib for bibliography management
\end{document}